\documentclass[a4paper,12pt]{amsart}

\usepackage[english]{babel}
\usepackage{tikz}
\usetikzlibrary{positioning,arrows,cd}

\usepackage{amsmath,amssymb,amsthm}

\usepackage{enumerate}

\usepackage{verbatim}

\newtheorem{theorem}{Theorem}[section]
\newtheorem{thmx}{Theorem}

\newtheorem{corx}[thmx]{Corollary}

\newtheorem{proposition}[theorem]{Proposition}
\newtheorem{lemma}[theorem]{Lemma}
\newtheorem{corollary}[theorem]{Corollary}

\theoremstyle{definition}

\theoremstyle{remark}
\newtheorem{example}[theorem]{Example}				
\newtheorem{remark}[theorem]{Remark}

\newcommand{\Z}{{\mathbb Z}}

\newcommand{\Q}{{\mathbb Q}}

\usepackage[pdfpagelabels]{hyperref}

\usepackage{colortbl}
\DefineNamedColor{named}{RoyalBlue}     {cmyk}{1,0.50,0,0}
\DefineNamedColor{named}{BrickRed}      {cmyk}{0,0.89,0.94,0.28}

\newcommand{\viru[1]}{\marginpar{\tiny\textcolor{BrickRed}{\textbf{Antonio:} #1}}}
\newcommand{\vicente[1]}{\marginpar{\tiny\textcolor{blue}{\textbf{Vicente:} #1}}}
\newcommand{\cris[1]}{\marginpar{\tiny\textcolor{RoyalBlue}{\textbf{Cristina:} #1}}}

\AtBeginDocument{%
   \def\MR#1{}
}

\begin{document}

\title{Realizing additive monoids as mapping degree sets}
\author{C. Costoya}
\address{CITMAga, Departamento de Matem{á}ticas, Universidade de Santiago de Compostela, 15782-Santiago de Compostela, Spain.}
\email{cristina.costoya@usc.es}
\author[V.\ Mu\~{n}oz]{V. Mu\~{n}oz}
\address{Departamento de \'Algebra, Geometr\'{\i}a y Topolog\'{\i}a, Universidad Complutense de Madrid, Ciudad Universitaria, 28040 Madrid, Spain}
\email{vicente.munoz@ucm.es}

\author[B.\ Valverde-Morales]{B. Valverde-Morales}
\address{Departamento de \'Algebra, Geometr{\'\i}a y Topolog{\'\i}a, Universidad de M{\'a}laga, 29071 M{\'a}laga, Spain}
\email{brunoval@uma.es}

\author[A.\ Viruel]{A. Viruel}
\address{Departamento de \'Algebra, Geometr{\'\i}a y Topolog{\'\i}a, Universidad de M{\'a}laga, 29071 M{\'a}laga, Spain}
\email{viruel@uma.es}
\thanks{This work was partially supported by MCIN/AEI/10.13039/501100011033 [PID2023-149804NB-I00 to C.C., B.V. and A.V, and PID2024-156578NB-I00 to V.M.].} 

\subjclass{55M25, 57N65, 55P62, 55R10}
\keywords{Mapping degree sets, aspherical manifold, strongly-chiral manifold, fiber bundle, domination}

\begin{abstract}
\if[]
Given two closed oriented connected manifolds $M$ and $N$ of the same dimension, the set of mapping degrees $D(M,N)$ consists of all integers realized as degrees of continuous maps from $M$ to $N$. A natural problem, recently highlighted in the literature, is to determine which subsets of $\mathbb{Z}$ containing $0$ can be realized as mapping degree sets. While finite sets are completely understood, the infinite case remains largely open.

In this paper, we focus on infinite subsets of $\mathbb{Z}$ that arise as additive submonoids and their combinations. We prove that every set obtained from additive submonoids of $\mathbb{Z}$ by finitely many operations of sum and product, as well as every finite set, can be realized as a mapping degree set of a pair of closed oriented connected manifolds. Furthermore, we establish a multiplicative property for mapping degree sets under Cartesian products, showing that these constructions are compatible with external products of manifolds.

Our approach combines explicit constructions of aspherical manifolds with prescribed mapping degree sets, including mapping tori and circle bundles, together with structural results on additive submonoids of $\mathbb{Z}$. As consequences, we obtain realizations of a wide class of sets, including arithmetic progressions starting at zero, finite unions of such progressions, and other naturally occurring subsets. These results provide substantial progress toward the characterization of infinite mapping degree sets and answer, in a broad range of cases, a question posed by Neofytidis, Wang, and Wang.
\fi

We prove that mapping degree sets are stable under multiplication by finite subsets of $\mathbb Z$ containing $0$ and by sets obtained from additive submonoids of $\mathbb Z$ through finitely many sums and products. In particular, every set of the latter type occurs as a mapping degree set. As a consequence, we obtain a broad family of infinite mapping degree sets, including finite unions of arithmetic progressions starting at $0$. These results extend previous work on the realization problem and are related to a question posed by Neofytidis, Wang, and Wang.
\end{abstract}

\maketitle


\section{Introduction}\label{sec:intro}

Let $M$ and $N$ be two closed oriented connected manifolds of the same dimension. The mapping degree set between $M$ and $N$ is defined as:
 $$
 D(M,N)=\{ d \in \Z\, |\, \exists f\colon M\to N, \, \deg(f)=d\}.
 $$

In \cite{NWW}, C.~Neofytidis, S.~Wang, and Z.~Wang study the problem of characterizing those subsets $A\subset \mathbb Z$ that arise as mapping degree sets of closed oriented connected manifolds. Equivalently, they seek to determine which sets can be written in the form
$
A=D(M,N)
$
for some pair of closed oriented connected manifolds $M$ and $N$ of the same dimension. Since the constant map has degree zero, every such set necessarily contains $0$. 

A cardinality argument shows that not every infinite subset of $\Z$ containing $0$ is a mapping degree set \cite[Theorem 1.3]{NWW}. Indeed, there are uncountably many such subsets, but only countably many mapping degree sets arising from pairs of manifolds. Moreover, Löh and Uschold proved that every mapping degree set is recursively enumerable \cite[Proposition A.1]{LohUsc}, yielding further obstructions to realizability.

In \cite[Problem 1.3]{NWW}, Neofytidis, Wang, and Wang ask whether every finite subset of $\mathbb Z$ containing $0$ occurs as a mapping degree set, and whether the same holds for arithmetic progressions containing $0$ \cite[Problem 1.6]{NWW}. 

The finite case was completely solved in \cite{CMuV2} by the first, second, and fourth authors of the present paper, who proved that every finite subset of $\mathbb Z$ containing $0$ is realizable as a mapping degree set. 

In this paper, we approach \cite[Problem 1.6]{NWW} through additive submonoids of $\mathbb Z$, namely subsets of $\mathbb Z$ containing $0$ and closed under addition. We prove that additive submonoids of $\mathbb Z$, and more generally sets obtained from them by finitely many sums and products, occur as mapping degree sets. In fact, we establish a stronger stability result: the product of an arbitrary mapping degree set with either a finite subset of $\mathbb Z$ containing $0$ or a set of the above form is again a mapping degree set.

\smallskip 
Let $\mathcal C_0$ be the class of additive submonoids of $\mathbb Z$. For $n\geq 0$, let $\mathcal C_{n+1}$ be the class of all sets obtained as a sum or a product of two elements of $\bigcup_{j\leq n}\mathcal C_j$. Define
$$
\mathcal C:=\bigcup_{n\geq 0}\mathcal C_n.
$$
Let $\mathcal F$ denote the class of finite subsets of $\mathbb Z$ containing $0$. Our main results are summarized in the following theorem.

\begin{thmx}\label{thm:main}
Let \(A \) be a set in $ \mathcal C$ or in   $ \mathcal F$, and let $M_1$ and $M_2$ be a pair of closed oriented connected manifolds of the same dimension. Then there exist closed oriented connected manifolds $N_1,N_2$ such that:
\begin{enumerate}
\item $D(N_1,N_2)=A.
$
\item 
$D(M_1\times N_1,M_2\times N_2)
=
D(M_1,M_2)\cdot A.$
\end{enumerate}
\end{thmx}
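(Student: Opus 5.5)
Part (1) follows from part (2). Take $M_1=M_2=S^n$ with $n\geq 1$, so that $D(S^n,S^n)=\Z$. Then $D(S^n\times N_1,S^n\times N_2)=\Z\cdot A$, which is not $A$, so I do not simply specialize. Instead I take $M_1=M_2=\mathrm{pt}$, a closed $0$-manifold with $D=\{0,1\}$ in the convention where degree-zero maps are always included; then (2) gives (1) directly. If that convention is awkward, I prove (1) separately with the same construction, which is the base of the argument anyway.

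The plan is to build, for every $A\in\mathcal C\cup\mathcal F$, a pair $(N_1,N_2)$ that is \emph{rigid} in a strong sense. The pair should realize $A$, and any map $M_1\times N_1\to M_2\times N_2$ should be homotopic, at the level of degree, to a product map $f\times g$. I would ensure this by choosing $N_1,N_2$ aspherical with fundamental groups whose homomorphisms are controlled. The targets are centerless, Hopfian, and have no nontrivial maps to or from $\pi_1(M_i)$ that interfere. Concretely, I use the projection $M_2\times N_2\to N_2$ and compose with the inclusion of $N_1$. Because $N_2$ is aspherical, a map is determined by $\pi_1$, and every homomorphism $\pi_1(M_1)\times\pi_1(N_1)\to\pi_1(N_2)$ of nonzero degree must factor through $\pi_1(N_1)$. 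Likewise the $M_2$ component must essentially factor through $M_1$. Degree then multiplies: $\deg=\deg(f)\deg(g)$. The inclusion $D(M_1,M_2)\cdot A\subseteq D$ is immediate from product maps. For the reverse inclusion, if the degree is nonzero, the $N$-factor of the map restricted to $\{x\}\times N_1$ has nonzero degree. From there a Künneth/fundamental class computation, in the style of the finite-set argument of \cite{CMuV2}, splits the degree.

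For the realization of $A$ itself I proceed by induction on the construction of $\mathcal C$. For $A\in\mathcal C_0$, a submonoid is $d\cdot S$ with $S$ a numerical semigroup or its negative, or it is $d\Z$. A numerical semigroup is finitely generated, say by $a_1,\dots,a_k$. I would realize a single generator monoid $a\Z_{\geq0}$ by a mapping torus or circle bundle over a rigid aspherical base, as in the abstract. Self-maps then have degrees equal to powers or multiples of a fixed number, and the fiber contributes $\Z_{\geq0}$ via the Euler class and the sign constraint. Sums are realized by connected sums or by gluing maps from several components. Products are realized by Cartesian products using part (2), applied inductively with $M_i$ the previously built manifolds: $D(N_1\times N_1',N_2\times N_2')=A\cdot B$. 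The finite case $\mathcal F$ is \cite{CMuV2}, upgraded to satisfy the rigidity needed for (2).

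The main obstacle is twofold. The first part is the sum operation $A+B$, since degrees of maps are not additive under obvious constructions. I would try domination of a connected sum $N\#N'$ onto a target by pinching, or a fiber-bundle construction whose degree splits as a sum of a base contribution and a fiber contribution. The second part is guaranteeing the strong rigidity in (2) for arbitrary $M_1,M_2$, for which I would rely on strongly chiral, Hopfian aspherical building blocks with trivial outer automorphism behavior.
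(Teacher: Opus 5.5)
Your architecture matches the paper's: induction over $\mathcal C$, products through a product formula applied with the previously built pair in the role of $M_1$, sums through connected sums, and $\mathcal F$ from \cite{CMuV2}. However, each load-bearing step is either missing or argued incorrectly.

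\textbf{The splitting in (2).} You claim that every nonzero-degree homomorphism $\pi_1(M_1)\times\pi_1(N_1)\to\pi_1(N_2)$ factors through $\pi_1(N_1)$. This cannot follow from properties of $N_2$ alone. Take $M_1=N_2$ and $M_2=N_1$: the swap map has degree $\pm1$, its $N_2$-component factors through $M_1$, and $\pm1\notin D(M_1,M_2)\cdot A$ when, say, $A=2\Z_{\geq0}$. So the pair must be chosen depending on $M_1$.

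Your safeguard against this is also unworkable. The condition ``no interfering homomorphisms from $\pi_1(M_1)$'' is never made precise, and nontrivial homomorphisms always exist once $b_1(M_1)>0$. Demanding centerless groups would exclude $E_{i,g}$, whose fiber class is central of infinite order. For the same reason, the natural $\pi_1$-level criterion (non-presentability by products) fails for $\pi_1(E_{i,g})$. Yet these bundles are needed for $\{0,d\}$.

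What the paper proves (Theorem~\ref{thm:product}) is only a cohomological statement: $f^*(1\otimes[N_i]^*)\in H^0(M_1;\Q)\otimes H^{n_i}(N;\Q)$ for each factor $N_i$ of the target. It rests on two hypotheses you never isolate.
\begin{enumerate}
\item Each $N_i$ is not dominated by nontrivial products (Lemmas~\ref{lem:Xnotdomprod} and~\ref{lem:Einotdomprod}). This kills the mixed K\"unneth terms once they are realized by maps $A\to M_1$, $B\to N$ via Thom \cite{Thom}.
\item No map $M_1\to N_i$ is surjective in rational homology. This kills the $H^{n_i}(M_1)\otimes H^0$ term, and it is arranged by taking $g>\dim H_1(M_1;\Q)$ and $2n+1>\dim M_1$.
\end{enumerate}
Then $f^*([M_2]^*\otimes1)=c[M_1]^*\otimes1+\delta$, and $\delta$ dies in the cup product, so $\deg f=cd$. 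No factorization of the $M_2$-component is needed. For $\mathcal F$, the upgrade you leave unspecified is exactly this: $\#_kE_{i_k,g}$ is not dominated by products (via the degree-one pinch onto $E_{i_1,g}$) and has $b_1\geq 2g$.

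\textbf{Realizing the infinite building blocks.} You give no construction of $\Z_{\geq0}$ or $a\Z_{\geq0}$. The phrase ``the fiber contributes $\Z_{\ge0}$ via the Euler class and the sign constraint'' is not a mechanism, and circle bundles over hyperbolic surfaces only give finite sets (Lemma~\ref{lem:Ei}). The paper's key input is M\"ullner's aspherical mapping torus $X_n$, whose monodromy $F$ has characteristic polynomial $x^{2n}-x+1$ (Proposition~\ref{prop:mullner}):
\begin{enumerate}
\item Nonzero self-map degrees are $\det G$ with $G$ integral and commuting with $F$, because $F\not\sim F^d$ for $d\neq1$.
\item Such determinants are norms $\prod_j|P(\lambda_j)|^2>0$, because $F$ has no real eigenvalues.
\item The matrices $xI$ realize every $x^{2n}$.
\end{enumerate}
Hilbert--Waring together with $D(\#_iM_i,N)=\sum_iD(M_i,N)$ for $\pi_{m-1}(N)=0$ (Lemma~\ref{lem:sum}) then gives $D(X_n^{\#s_n},X_n)=\Z_{\geq0}$. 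Finally $d\Z_{\geq0}=\{0,d\}\cdot\Z_{\geq0}$ follows from the product formula with $D(E_i,E_{id})=\{0,d\}$.

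\textbf{Sums.} Lemma~\ref{lem:sum} is precisely the additivity you leave open as the ``main obstacle''. But it only adds degree sets that share a common aspherical target, and you never arrange one. In the base case the paper uses the target $E_j\times X_n$ with $j=\prod_m d_m$. In the inductive sum step it uses the target $N_{1,2}\times N_{2,2}$ with sources $N_{1,1}\times N'_{2,1}$ and $N_{2,1}\times N'_{1,1}$, where $D(N'_{k,1},N_{k,2})=\Z_{\geq0}$. This works because $A_k\cdot\Z_{\geq0}=A_k$ for $A_k\in\mathcal C$, and it is why the induction must carry the auxiliary $N'_1$.

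\textbf{Minor point.} Deducing (1) from (2) with $M_1=M_2=\mathrm{pt}$ yields a pair different from the one built for the given $M_1,M_2$. The statement needs (1) and (2) for the same pair. The paper gets this by realizing $A$ first and then enlarging the parameters $g$ and $n$ relative to $M_1$.
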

The proof of Theorem~\ref{thm:main} is completed in Section~\ref{sec:plus}: the cases of sets in $\mathcal F$ and $\mathcal C$ are established in Theorems~\ref{thm:proofmainf} and \ref{thm:proofmain}, respectively. As a consequence, we obtain the following stability property. 

\begin{corx}
Let $\mathcal R$ be the class of realizable subsets of $\mathbb Z$. Then,  $\mathcal R$ is stable under multiplication by elements of $\mathcal F$ and $\mathcal C$, that is
$$
\mathcal F\cdot\mathcal R\subseteq\mathcal R,
\qquad
\mathcal C\cdot\mathcal R\subseteq\mathcal R.
$$
\end{corx}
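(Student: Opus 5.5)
The corollary follows directly from part (2) of Theorem~\ref{thm:main}. Let $B\in\mathcal R$, so $B=D(M_1,M_2)$ for some closed oriented connected manifolds $M_1,M_2$ of the same dimension. Let $A\in\mathcal F$ or $A\in\mathcal C$. We must show that the product set $A\cdot B=\{ab \mid a\in A,\ b\in B\}$ is again realizable.

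For this pair $M_1,M_2$ and this $A$, Theorem~\ref{thm:main} supplies closed oriented connected manifolds $N_1,N_2$ of a common dimension with
$$
D(M_1\times N_1,M_2\times N_2)=D(M_1,M_2)\cdot A=B\cdot A .
$$
It remains to check that $(M_1\times N_1,\,M_2\times N_2)$ is an admissible pair. A product of closed oriented connected manifolds is again closed, oriented and connected, carrying the product orientation. Since $\dim M_1=\dim M_2$ and $\dim N_1=\dim N_2$, we also have $\dim(M_1\times N_1)=\dim(M_2\times N_2)$.

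Multiplication of subsets of $\mathbb Z$ is commutative, so $B\cdot A=A\cdot B$. Hence $A\cdot B\in\mathcal R$, which proves both inclusions $\mathcal F\cdot\mathcal R\subseteq\mathcal R$ and $\mathcal C\cdot\mathcal R\subseteq\mathcal R$.

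The only point needing care is that the manifolds $N_1,N_2$ in Theorem~\ref{thm:main} are allowed to depend on the chosen pair $M_1,M_2$, and not only on $A$. That is exactly the order of quantifiers in the theorem: for every $A$ and every pair $M_1,M_2$ there exist suitable $N_1,N_2$. Consequently the corollary adds no new difficulty. All the real work, namely building aspherical $N_1,N_2$ that are rigid enough for the product formula to hold, lies in Theorems~\ref{thm:proofmainf} and~\ref{thm:proofmain}, which we are assuming.
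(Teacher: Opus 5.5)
Your proof is correct and matches the paper's argument: the corollary is presented as an immediate consequence of Theorem~\ref{thm:main}(2), applied to a pair $(M_1,M_2)$ realizing $B$, so that $(M_1\times N_1,M_2\times N_2)$ realizes $D(M_1,M_2)\cdot A=A\cdot B$. Your remark on the order of quantifiers is the right point to flag, since Theorems~\ref{thm:proofmainf} and~\ref{thm:proofmain} choose the parameters $g$ and $n$ large relative to $M_1$.
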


Observe that Theorem~\ref{thm:main} applies to several natural classes of infinite subsets of $\mathbb Z$. In particular, every additive submonoid of $\mathbb Z$ occurs as a mapping degree set. This includes the sets $\mathbb Z_{\geq 0}$,  $\mathbb Z_{\leq 0}$, as well as arithmetic progressions of the form $
d\,\Z_{\ge0}=\{0,d,2d,3d,\ldots\},
$
where $d\neq 0$.  More generally, it  yields realizations of sets obtained from additive submonoids by finitely many sums and products, including finite unions of arithmetic progressions and the more elaborate families of sets described in Example~\ref{ex:monoid}.

Finally, we note that \cite[Problem 1.6]{NWW} is solved for arithmetic progressions contained entirely in the non-negative integers or entirely in the non-positive integers. The case of arithmetic progressions containing only finitely many positive integers or only finitely many negative integers remains open.



\section{Additive submonoids of $\Z$} \label{sec:combinatory}



This section is devoted to the characterization of additive submonoids of $\Z$. We show that every such submonoid can be expressed as a finite sum of sets of the form
\[
d\,\Z_{\ge0}=\{0,d\}\Z_{\ge0}=\{0,d,2d,3d,\ldots\}.
\]
This follows from the fact that additive submonoids of $\Z$ are finitely generated (see \cite[Corollary~2.8]{Rosales}). For the reader’s convenience, we include a proof of this result.

\begin{proposition}
\label{prop:decomposition}
Let $A$ be an additive submonoid of $\Z$. Then there exist nonzero integers
$d_1,\ldots,d_m\in\Z$ such that
\[
A=\sum_{i=1}^m d_i\,\Z_{\ge0}.
\]
\end{proposition}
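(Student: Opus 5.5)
The plan is to show that $A$ is finitely generated as a monoid. Once that is done, if $A$ is generated by nonzero elements $d_1,\dots,d_m$, then
\[
A=\sum_{i=1}^m d_i\,\Z_{\ge0}
\]
directly from the definition. The degenerate case $A=\{0\}$ is handled by taking any $d_1\neq 0$ with $m=0$, or by reading the empty sum as $\{0\}$; I will note this convention explicitly.

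I split according to the signs of the elements of $A$. Let $A^+=A\cap\Z_{>0}$ and $A^-=A\cap\Z_{<0}$.

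\emph{Case 1: $A^-=\emptyset$, so $A\subseteq\Z_{\ge0}$.} Let $g=\gcd(A)$; it is attained as the gcd of finitely many elements $a_1,\dots,a_k\in A$. The submonoid $S$ generated by $a_1,\dots,a_k$ contains every sufficiently large multiple of $g$. This is the standard Frobenius/Chicken McNugget fact: write $g=\sum c_ia_i$ with $c_i\in\Z$, then shift by a large multiple of $\sum a_i$ to make all coefficients nonnegative and cover a full residue system modulo $a_1$. Hence there is $N$ such that every multiple of $g$ that is at least $N$ lies in $S\subseteq A$. It follows that $A\setminus S$ consists of finitely many elements below $N$. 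Adding these elements to $\{a_1,\dots,a_k\}$ gives a finite generating set of $A$.

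\emph{Case 2: $A^+=\emptyset$.} Apply Case 1 to $-A$.

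\emph{Case 3: $A$ has both positive and negative elements.} Pick $p\in A^+$ and $q\in A^-$. Then $A$ contains $p\,\Z_{\ge0}+q\,\Z_{\ge0}$, which is the subgroup $\gcd(p,q)\Z$: indeed $-p=(p-1)p\cdot(\ldots)$ can be realised via $|q|\cdot p+p\cdot q=0$, so $-p=(|q|-1)p+ pq/\ldots$. More cleanly, $p\cdot|q|+|q|\cdot q\cdot(p/|q|)$ is not needed. The direct argument is that $(|q|-1)p+p\,q=-p$ lies in $A$ since both summands are nonnegative combinations, and symmetrically $-q\in A$. So $A$ is closed under negation, hence $A$ is a subgroup $g\Z$. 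Then
\[
A=g\,\Z_{\ge0}+(-g)\,\Z_{\ge0}.
\]

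The main obstacle is Case 1, specifically the claim that a finitely generated numerical-type monoid contains all large multiples of its gcd. I would prove it by dividing by $g$ to reduce to $\gcd=1$. Then, for $x$ large, I write $x\equiv\sum c_ia_i \pmod{a_1}$ with $0\le c_i<a_1$ for $i\ge2$, and absorb the remainder using a nonnegative multiple of $a_1$. The remaining steps are routine bookkeeping.
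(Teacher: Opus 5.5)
Your argument is correct. Cases 1 and 2 follow the paper's approach:
\begin{itemize}
\item pass to $A\subseteq\Z_{\ge0}$;
\item choose finitely many elements whose gcd is $\gcd(A)$;
\item show that the monoid they generate contains every sufficiently large multiple of the gcd;
\item adjoin the finitely many elements of $A$ below that bound.
\end{itemize}
The paper gets the Frobenius-type step from B\'ezout, by producing $s,\ s+d\in A'$ and writing $nd=(qd-r)s+r(s+d)$. Your residue argument modulo $a_1$ is an equally standard substitute.

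The genuine difference is Case 3. The paper notes that $A$ is the sum of the two submonoids $A\cap\Z_{\ge0}$ and $A\cap\Z_{\le0}$, and applies the one-signed cases to each. This is a one-line reduction, but it says nothing about the shape of $A$ and yields a redundant generating set. You instead use $(|q|-1)p+pq=-p$ and $(p-1)q+|q|p=-q$ to show that a submonoid with elements of both signs is closed under negation. Hence it equals $g\Z=g\Z_{\ge0}+(-g)\Z_{\ge0}$. This costs a short computation, but it gives a sharper trichotomy: an additive submonoid of $\Z$ lies in $\Z_{\ge0}$, lies in $\Z_{\le0}$, or is a subgroup. In the mixed case, the two generators $\pm g$ suffice.

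Some clean-up is needed in the write-up.
\begin{itemize}
\item Delete the false starts in Case 3, namely the expressions containing ``$\ldots$'' and the sentence declaring something ``not needed''.
\item The claim $p\Z_{\ge0}+q\Z_{\ge0}=\gcd(p,q)\Z$ is true, but it follows from your direct argument, not from what precedes it.
\item State explicitly that the identities are applied to every $p\in A^+$ against one fixed $q\in A^-$, and to every $q\in A^-$ against one fixed $p\in A^+$. That is what closure under negation requires.
\item For $A=\{0\}$, the phrase ``any $d_1\neq0$ with $m=0$'' is incoherent. No nonzero $d$ can appear, because $d\Z_{\ge0}$ is infinite. The only reading is $m=0$ with the empty sum equal to $\{0\}$, a convention the paper's statement leaves implicit.
\end{itemize}
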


\begin{proof}
    Suppose that $A$ only contains 
    nonnegative integers and write $d =\gcd(A)$. Then $A\subset d\, \Z_{\ge 0}$.
    By Noetherianity, there exists a finite number of nonzero elements
$d_1,\ldots,d_k\in A$ such that
$
d=\gcd(d_1,\ldots,d_k).
$
    Reordering them, we can write Bézout's identity as
    $$
    a_1d_1+\ldots+a_rd_r-a_{r+1}d_{r+1}-\ldots-a_{k}d_{k}=d,
    $$
    for nonnegative integers $a_1,\ldots,a_k\geq 0$, 
    and some $0\leq r\leq k$. Let $A'\subset \Z$ be the submonoid generated by $\{d_1,\ldots,d_k\}\subset A$. So
    $A'\subset A$.
    Then $s:=a_{r+1}d_{r+1}+\ldots+a_{k}d_{k}\in A'$ and
    $s+d=a_1d_1+\ldots+a_{r}d_{r}\in A'$. 
If $s=0$ then $d\in A'$, so $d\,\Z_{\ge0}\subset A'\subset A\subset d\,\Z_{\ge0}$, hence $A'=  A$, and $A$ is finitely generated.
    
    If $s>0$, let $n\geq (s-1)s$. By Euclidean division we can write  
    $n=qs+r$ with $s>r\geq 0$. Then
    $qd\geq q\geq s-1\geq r$ and therefore
    $$
    nd=qsd-rs+rs+rd=(qd-r)s+r(s+d)\in A'.
    $$
    Therefore, every multiple $nd$ with $n\geq (s-1)s$ belongs to $A'$. Therefore 
    $A\cap d\,\Z_{\ge(s-1)s}=A'\cap d\,\Z_{\ge(s-1)s}= d\,\Z_{\ge(s-1)s}$. This implies that  
\[
\begin{aligned}
A
&=
\sum_{\substack{d'\in A\\ d'\geq (s-1)s\,d}}
d'\,\Z_{\ge0}
+\sum_{\substack{d'\in A\\ d'< (s-1)s\,d}}
d'\,\Z_{\ge0}
=
A'
+\sum_{\substack{d'\in A\\ d'< (s-1)s\,d}}
d'\,\Z_{\ge0}
=
\sum_{i=1}^k d_i\,\Z_{\ge0}
+\sum_{\substack{d'\in A\\ d'< (s-1)s\,d}}
d'\,\Z_{\ge0},
\end{aligned}
\]
hence $A$ is finitely generated. 

 Suppose now that $A$ contains only nonpositive integers. 
    Then $-A$ is a submonoid of $\Z$ that only contains nonnegative integers and, by the former case, 
    $-A=\sum\limits_{i=1}^md_i\,\Z_{\ge0}$, for some $d_1,\ldots,d_m\geq 0$, so $A=\sum\limits_{i=1}^m(-d_i)\,\Z_{\ge0}$.  Finally, if $A$ contains both 
    positive and negative integers, consider the submonoids of $A$ defined by 
    $A^+:=\{d\in A\,|\,d\geq 0\}$ and $A^-:=\{d\in A\,|\,d\leq 0\}$. 
    The result follows from the previous cases and the equality $A=A^++A^-$. 

\end{proof}

The following examples illustrate the richness of the class of subsets of $\mathbb Z$ generated by finite products of elements from the classes $\mathcal F$ and $\mathcal C$ defined in the Introduction.

\if{}
\begin{example}\label{ex:monoid}
The following infinite subsets of $\Z$ are realizable as mapping degree sets:
\begin{enumerate}
    \item Any additive submonoid $A$ of $\Z$, for instance:
    \begin{enumerate}
        \item $A=\Z_{\geq 0}$, the set of all nonnegative integers.
        \item $A=\Z_{\leq 0}$, the set of all nonpositive integers.
        \item $A=d\,\Z_{\geq 0}=\{0,d,2d,3d,\ldots\}$, where $d\neq 0$.
        \item $A=\{0, md,(m+1)d,(m+2)d,\ldots\}$
    where $d\neq 0$ and $m>1$. That is, an arithmetic progression starting at $0$ minus the first $(m-1)$ nonzero elements.
    \end{enumerate}
    
    \item Any finite union of arithmetic progressions starting at $0$.

    \item The complement, together with $0$, of a symmetric interval, namely
    $
    C=\{n\in\Z: |n|>b\}\cup\{0\},
    $
    where $b\geq 1$.
    
    \item The composite numbers together with $0$, namely
    $
    C=\{n\in\Z_{\geq 0}: n\neq 1, \, n \,\text{ not a prime}\}.
    $
    
    \item The complement in $\Z_{\geq 0}$ of $1$ and the prime numbers less than some $q\geq 1$, namely
    $
    A=\{n\in\Z_{\geq 0}: n\neq 1,\ n\neq p \text{ prime with } p<q\}.
    $
\end{enumerate}
Cases from (1) are clear. For (2),
the union of finitely many arithmetic progressions $d_i \,{\mathbb Z}_{\geq 0}$, $1\leq i\leq r$, is equal to
 $$
  \bigcup_{i=1}^r d_i\,{\mathbb Z}_{\geq 0}=
  \{0,d_1,\ldots,d_r\}\cdot {\mathbb Z}_{\geq 0}\, .
  $$
  so Theorem \ref{thm:main} applies.
For (3), take the submonoid $A=\{0,b+1,b+2,\ldots\}$, then $C=\{n\in {\mathbb Z}: 
    |n|>b \}\cup\{0\}=\{0,1,-1\}\cdot A$.
Case (
4) is the product of submonoids $M=\{0,2,3,4,\ldots\}\cdot\{0,2,3,4,\ldots\}$. Finally, the addition of the submonoid $A=\{0,q,q+1,q+2,\ldots\}$ to $M$ gives the set $C$ of (5).

\end{example}
\fi

\begin{example}\label{ex:monoid}
The following infinite subsets of $\Z$ are realizable as mapping degree sets.
\begin{enumerate}
    \item\label{ej:1} Additive submonoids $A$ of $\Z$, for instance:
    \begin{enumerate}
        \item $A= \Z_{\geq 0}$ or $A=\Z_{\leq 0}$.
        \item Arithmetic progressions starting at $0$,
        $
        d\,\Z_{\geq 0}=\{0,d,2d,3d,\ldots\},
        $
        where $d\neq 0$.
        \item Arithmetic progressions starting at $0$ with the first $(m-1)$ nonzero elements removed:
        $
        A=\{0,md,(m+1)d,(m+2)d,\ldots\},
        $
        where $d\neq 0$ and $m>1$.
    \end{enumerate}
    
    \item\label{ej:2} Finite unions of arithmetic progressions starting at $0$.
    
    \item\label{ej:3} The complement, together with $0$, of a symmetric interval:
    $
    C=\{n\in\Z: |n|>b\}\cup\{0\},
    $
    where $b\geq 1$.
    
    \item\label{ej:4} The composite numbers together with $0$:
    $
    C=\{n\in\Z_{\geq 0}: n\neq 1,\ n \text{ not prime}\}.
    $
    
    \item\label{ej:5} The complement in $\Z_{\geq 0}$ of $1$ and the prime numbers less than some $q\geq 1$. Namely
    $
    C=\{n\in\Z_{\geq 0}: n\neq 1,\ n\neq p \text{ prime with } p<q\}.
    $
\end{enumerate}
The realizability of the above sets follows from Theorem~\ref{thm:main} as follows. The sets in \eqref{ej:1} are additive submonoids of $\Z$. For \eqref{ej:2}, if
$
A_i=d_i\,\Z_{\geq 0},
$
$1\leq i\leq r$, then
\[
\bigcup_{i=1}^r A_i
=
\{0,d_1,\ldots,d_r\}\cdot \Z_{\geq 0},
\]
which is a product of a finite set and an additive submonoid.
For \eqref{ej:3}, letting
$
A=\{0,b+1,b+2,\ldots\},
$
we obtain
\[
C=\{n\in\Z: |n|>b\}\cup\{0\}
=
\{0,1,-1\}\cdot A,
\]
again a product of a finite set and an additive submonoid. For \eqref{ej:4},
\[
C=
\{0,2,3,4,\ldots\}\cdot \{0,2,3,4,\ldots\},
\]
which is a product of additive submonoids. Finally, if
$
A=\{0,q,q+1,q+2,\ldots\},
$
then the set $C$ in \eqref{ej:5} satisfies
\[
C=A+C',
\]
where
$
C'=\{n\in\Z_{\geq 0}: n\neq 1,\ n \text{ not prime}\}.
$

Thus each of the sets in \eqref{ej:2}--\eqref{ej:5} belongs to the class of subsets of $\mathbb Z$ generated by finite products of elements from the classes $\mathcal F$ and $\mathcal C$. 
The finiteness assumption in \eqref{ej:2} cannot be removed. Indeed, there exist countable unions of arithmetic progressions that are not recursively enumerable, and hence cannot occur as mapping degree sets.
\end{example}

\section{Realization of $\Z_{\ge 0}$ 
as mapping degree set}\label{sec:z>=0}

Our next objective is to describe a pair of closed connected manifolds
$M,N$ such that $D(M,N)=\Z_{\ge 0}$.
To realize this additive monoid, we consider manifolds studied by M\"ullner in \cite[Section 3.1]{MullnerAGT} and refer to \cite[Section 3.1]{MullnerThesis} for additional details.

For any given integer $n> 1$, let $T^{2n}=(S^1)^{2n}\subset {\mathbb C}^{2n}$ and define $f\colon T^{2n}\to T^{2n}$ as the orientation preserving diffeomorphism given by 
$$
f(z_1,z_2,\ldots,z_{2n-1},z_{2n})=(z_2, z_3,\ldots, z_{2n},z_1^{-1}z_2).
$$ 
This means that the action of $f$ on the group $H_1(T^{2n})=\Z^{2n}$ is given 
by the linear map 
$F\colon {\mathbb Z}^{2n}\to {\mathbb Z}^{2n}$ 
with matrix 
 \begin{equation}\label{eqn:F}
 F=
\left(\begin{array}{c|c}
  0   & \\
  \vdots & I_{2n-1}\\
  0 & \\ \hline
  -1   & \begin{array}{c|c} 1 & 0\, \cdots \,0\end{array}
\end{array}\right)
.
\end{equation}

Let $X_n$ be the mapping torus of $f$, that is, 
\begin{equation}\label{eq:mappingtorus}
X_n:=\bigl(T^{2n}\times [0,1]\bigr)/\sim .
\end{equation}
where $(t,0)\sim (f(t),1)$. When the dimension is not relevant, we write $X$ instead of $X_n$.

Notice that the mapping torus $X$ is a $T^{2n}$-fiber bundle over $S^1$, that is, the natural projection $X\to \big([0,1]/0\sim 1\big)\cong S^1$ gives rise to a fiber sequence
\begin{equation}\label{eq:fibra_ de_X}
T^{2n}\longrightarrow X\longrightarrow S^1.
\end{equation}
Then the long exact sequence of homotopy groups associated with \eqref{eq:fibra_ de_X} shows that $X$ is aspherical, i.e., $\pi_r(X)=0$ for $r>1$.

\begin{proposition}\label{prop:mullner}
The set of self-mapping degrees of the $(2n+1)$-manifold $X$ satisfies
 $$
 \{x^{2n}:x\in\mathbb{Z}\}\subset D(X,X)\subset \Z_{\ge0}.
 $$
\end{proposition}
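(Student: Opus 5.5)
Since $X$ is aspherical, homotopy classes of self-maps of $X$ correspond to conjugacy classes of endomorphisms of $G=\pi_1(X)=\Z^{2n}\rtimes_F\Z$. The plan is to exploit this correspondence in both directions. Realizing $x^{2n}$ amounts to writing down an explicit endomorphism of $G$. Showing that degrees are nonnegative amounts to classifying all endomorphisms of $G$ well enough to compute their degree.

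\textbf{Lower inclusion.} For $x\in\Z$, I would take the map of $T^{2n}\times[0,1]$ given by $(t,s)\mapsto(x\cdot t,s)$, where $x\cdot t$ denotes the $x$-th power map on the torus. This commutes with $f$, since $f$ is a group automorphism of $T^{2n}$. Hence it descends to a fiber-preserving self-map of $X$ covering the identity of $S^1$. Its degree is the degree on the fiber, namely $\det(xI_{2n})=x^{2n}$. This gives $\{x^{2n}\}\subset D(X,X)$, including $x=0$.

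\textbf{Upper inclusion.} First I would show that $\Z^{2n}$ is characteristic enough to be preserved by any endomorphism $\phi$ of $G$. Concretely, I would prove that $\phi(\Z^{2n})\subset\Z^{2n}$. The key input is the characteristic polynomial of $F$, which is $\chi(\lambda)=\lambda^{2n}-\lambda+1$ (up to sign conventions). I would check the following facts about it:
\begin{itemize}
\item $\chi(1)=1\neq0$, so $F-I$ is invertible over $\Q$;
\item $\chi$ has no roots of unity among its roots;
\item $\chi$ is irreducible, or at least $F$ has no periodic eigenvectors.
\end{itemize}
From these, $\Z^{2n}$ is rationally the commutator-related part: $G/[G,G]$ has rank one, so the $\Z$-quotient is detected by abelianization. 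Since $\Z^{2n}$ is the kernel of $G\to H_1(G)/\mathrm{torsion}\cong\Z$, and this kernel is preserved by any endomorphism, we get $\phi(\Z^{2n})\subset\Z^{2n}$.

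Thus $\phi$ restricts to a linear map $A$ on $\Z^{2n}$. It also induces multiplication by some $k$ on the base $\Z$, and the semidirect product relation forces
$$AF=F^{k}A.$$
The degree of the corresponding map is then $k\cdot\det A$. If $k=0$ the degree is $0$, so assume $k\neq0$ and $\det A\neq0$.

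\textbf{Main obstacle.} The hard step is showing that $k\det A\ge0$. From $AF=F^kA$ with $A$ invertible over $\Q$, $F$ and $F^k$ are conjugate, so the eigenvalue multisets satisfy $\{\lambda_i\}=\{\lambda_i^k\}$. I would show that this forces $k=1$, as follows:
\begin{itemize}
\item Since $|\det F|=1$, some eigenvalue has modulus greater than $1$. If $|k|\ge2$, taking the eigenvalue of maximal modulus gives a contradiction, because $\lambda^k$ has strictly larger modulus. This uses that $\chi$ is not cyclotomic, so the maximal modulus exceeds $1$.
\item For $k=-1$, we would need $\chi$ to be reciprocal up to sign. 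Comparing $\lambda^{2n}-\lambda+1$ with its reciprocal $\lambda^{2n}-\lambda^{2n-1}+1$ shows they differ for $n>1$.
\end{itemize}
So $k=1$, and $A$ commutes with $F$. If $\chi$ is irreducible, $A$ is a polynomial in $F$ over $\Q$, i.e. $A=p(F)$. Then
$$\det A=\prod_i p(\lambda_i)=N_{K/\Q}\bigl(p(\lambda)\bigr),$$
where $K=\Q(\lambda)$ has degree $2n$. I would argue this norm is nonnegative because $\chi$ has no real roots:
\begin{itemize}
\item $\chi(\lambda)>0$ for all real $\lambda$, which I would check by calculus;
\item hence all embeddings of $K$ are complex and come in conjugate pairs;
\item so each pair contributes $|p(\lambda)|^2\ge0$.
\end{itemize}
This gives $\deg=\det A\ge0$. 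Irreducibility of $\lambda^{2n}-\lambda+1$ would be cited from Selmer's results on trinomials, or else handled by applying the same pairing argument to each irreducible factor of $\chi$.
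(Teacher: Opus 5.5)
Your skeleton matches the paper's proof. You reduce to pairs $(A,k)$ with $AF=F^{k}A$ and degree $k\det A$, force $k=1$ by comparing eigenvalues, and then use that matrices commuting with $F$ are polynomials in $F$. You derive the reduction yourself from the full invariance of $[G,G]=\Z^{2n}$, which only needs $\chi(1)=1$. The degree formula $k\det A$ still needs an argument or a citation; the paper cites M\"ullner's thesis for it.

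The genuine problem is the number theory you plan to feed into the second half. Two of the facts you intend to verify are false for infinitely many $n$. If $n\equiv 1 \pmod 3$ and $\zeta$ is a primitive sixth root of unity, then $\zeta^{2n}=\zeta^{2}=\zeta-1$, so $x^{2}-x+1$ divides $\chi(x)=x^{2n}-x+1$. For example, $x^{8}-x+1=(x^{2}-x+1)(x^{6}+x^{5}-x^{3}-x^{2}+1)$. For such $n$, $\chi$ has roots of unity among its roots, is reducible, and $F^{6}$ fixes a rank-two sublattice of $\Z^{2n}$. Selmer's theorem on $x^{m}+x+1$ (note $\chi(-x)=x^{2n}+x+1$) exhibits exactly this factor rather than giving irreducibility. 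So two of your steps fail as planned for these $n$: your justification of ``some eigenvalue has modulus $>1$'', which rests on the absence of roots of unity and would in any case need Kronecker's theorem, and your route to $A\in\Q[F]$ via irreducibility. (The paper's proof of this proposition never uses the absence of roots of unity. The paper's later lemma asserting it has the same defect: the last step of its gcd computation should read $\gcd(x^{2}-x+1,x^{2n-2}-1)$, which is nontrivial when $3\mid n-1$.)

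The repairs are what the paper does.
\begin{itemize}
\item The roots of $p=\chi$ are simple, since a common root of $p$ and $p'$ would be the real number $2n/(2n-1)$, whereas $p>0$ on $\mathbb{R}$.
\item Since $q(x)=x^{2n}-x^{2n-1}+1\neq p(x)$, some root $\lambda$ has $\lambda^{-1}$ not a root. Because $\bar\lambda$ is a root, $|\lambda|\neq 1$, and $\prod_i|\lambda_i|=\det F=1$ then yields a root of modulus $>1$.
\item Distinct eigenvalues give that the centralizer of $F$ in $\mathrm{Mat}_{2n\times 2n}(\Q)$ is $\Q[F]$, with no irreducibility needed. (So does the fact that $F^{T}$ is a companion matrix, so $F$ is cyclic.) Hence $\det A=\prod_{j=1}^{n}|P(\lambda_j)|^{2}\geq 0$. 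Your fallback of pairing over each irreducible factor also works once you know the factors are distinct.
\end{itemize}

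Finally, your treatment of $|k|\geq 2$ fails as written for $k\leq -2$. If $\mu$ has maximal modulus $R>1$, then $|\mu^{k}|=R^{k}<1$, and there is no contradiction. Instead, $AFA^{-1}=F^{k}$ implies $A^{2}FA^{-2}=F^{k^{2}}$ with $k^{2}\geq 4$, and you can run the maximal-modulus argument for $F^{k^{2}}$. The paper passes to $F^{d^{r}}$ for the same purpose. With these changes your argument goes through for every $n>1$.
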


\begin{proof}
Combining \cite[Proposition 27]{MullnerThesis} and \cite[Lemma 28]{MullnerThesis} we obtain that
\begin{align*}
 D(X,X)& =\{d\det(G): G\in \mathrm{Mat}_{2n\times 2n}(\mathbb{Z}), GF=F^d G\}\\
& =\{0\}\cup\{d\det(G): G\in \mathrm{Mat}_{2n\times 2n}(\mathbb{Z})\cap \mathrm{GL}_{2n}(\mathbb{Q}), \, GFG^{-1}=F^d\}.
\end{align*}
Therefore, describing $D(X,X)$ requires identifying the integers $d$ such that $F$ and $F^d$ 
are similar matrices, and then 
describing the integral changes of basis 
$G$ such that $GFG^{-1}=F^d$. 

First, to decide whether $F$ and $F^d$ are similar matrices, consider the characteristic polynomial of $F$ which is $p(x)=\chi_F(x)=x^{2n}-x+1$. 
A direct computation shows that the global minimum of $p$ is
$p\!\left(\left(\frac1{2n}\right)^{\frac1{2n-1}}\right)>0$,
and hence $p$ has no real roots. Furthermore, all roots of $p$ are simple. Indeed, if $\lambda$ were a multiple root of $p$, then $p(\lambda)=p'(\lambda)=0$. Since $p'(x)=2n\,x^{2n-1}-1$, it follows that $\lambda^{2n-1}=1/(2n)$ and thus $\lambda^{2n}=\lambda/(2n)$. Substituting into $p(\lambda)=0$ gives $\lambda=2n/(2n-1) \in \mathbb R$, contradicting the fact that $p$ has no real roots. 
Therefore  
$p$
has exactly $2n$ pairwise different complex
roots $\{\lambda_1,\overline{\lambda_1},\ldots,\lambda_n, \overline{\lambda_n}\}$. 

Now, the polynomial
whose roots are the inverses 
$\{\lambda_1^{-1},\overline{\lambda_1}{}^{-1},\ldots,\lambda_n^{-1}, \overline{\lambda_n}{}^{-1}\}$ is
$q(x)=x^{2n}p(x^{-1})
=x^{2n}-x^{2n-1}+1\neq p(x)$, since $n>1$. 
Hence there must exist at least one root, 
say $\lambda_n$, such that 
$p(\lambda_n^{-1})\neq 0$. 
Thus $\lambda_n^{-1}\neq \overline{\lambda_n}$ and hence $|\lambda_n|\neq 1$. Since 
$\det(F)=p(0)=\prod |\lambda_i|^2=1$, we may assume 
that $|\lambda_n|>1$.

Now, assume $F$ and $F^d$ are similar matrices. Then $F$ and $F^{d^r}$ are also similar for any $r\geq 0$ and 
$\chi_F=\chi_{F^{d^r}}$. Therefore, the eigenvalues of $F^{d^r}$, namely 
$\{\lambda_1^{d^r},\overline{\lambda_1}{}^{d^r},\ldots,\lambda_n^{d^r}, \overline{\lambda_n}{}^{d^r}\}$, must be roots of $\chi_F$ for every $r\geq 1$, that is
$$
\{\lambda_1^{d^r},\overline{\lambda_1}{}^{d^r},\ldots,\lambda_n^{d^r}, \overline{\lambda_n}{}^{d^r}\}=\{\lambda_1,\overline{\lambda_1},\ldots,\lambda_n, \overline{\lambda_n}\},
$$
for any $r\geq 1$.
But, if $|d|>1$, $\lim\limits_{r\to\infty}|\lambda_n^{d^{r}}|=\infty$ while $\chi_F$ has finitely many roots. So if $F$ and $F^d$ are similar matrices, then $|d|=1$.
Moreover, $F$ and $F^{-1}$ are not similar
since $\chi_{F^{-1}}(x)=q(x)\neq p(x)=\chi_F(x)$, 
hence if $F$ and $F^d$ are similar matrices, then $d=1$.

Second, if $GFG^{-1}=F$, then $G$ centralizes $F$ and, since all eigenvalues of $F$ are pairwise different complex numbers, $G=P(F)$ where $P$ is a polynomial with coefficients in $\mathbb{Q}$ (see \cite[Lemma 30]{MullnerThesis}). Therefore 
$$
\det(G)=\prod_{j=1}^n P(\lambda_j)
P(\,\overline{\lambda_j}\,)=
\prod_{j=1}^n|P(\lambda_j)|^2>0,
$$
and then $D(X,X) \subset\mathbb{Z}^{\geq 0}$. 

Finally, every integral diagonal matrix $G_x=x I_{2n} 
$, with $x\in\mathbb{Z}$, centralizes $F$ so 
$$
\{\det(G_x)=x^{2n} : x\in\mathbb{Z}\}\subset D(X,X),
$$
which completes the proof.
\end{proof}

\begin{remark}
The set \(D(X,X)\) is multiplicative and can be described as the set of values
of a polynomial evaluated at integer points.
Indeed, by \cite[Lemma 30]{MullnerThesis},
\[
C_{\mathbb Z}(F)
=
\mathrm{Mat}_{2n\times 2n}(\mathbb Z)
\cap
\{P(F):P\in\mathbb Q[x]\}.
\]
Since the characteristic polynomial of \(F\) has degree \(2n\), every polynomial
in \(F\) can be written, modulo \(\chi_F\), in the form
\[
a_0I+a_1F+\cdots+a_{2n-1}F^{2n-1},
\qquad a_0,\ldots,a_{2n-1}\in\mathbb Q.
\]
Thus
\[
C_{\mathbb Z}(F)
=
\left\{
a_0I+a_1F+\cdots+a_{2n-1}F^{2n-1}
:
a_i\in\mathbb Q,\ 
a_0I+\cdots+a_{2n-1}F^{2n-1}
\in
\mathrm{Mat}_{2n\times 2n}(\mathbb Z)
\right\}.
\]
In this particular case, the explicit form of these matrices shows that the
integrality condition is equivalent to
\[
a_0,\ldots,a_{2n-1}\in\mathbb Z.
\]
Consequently,
\[
D(X,X)
=
\left\{
\det(a_0I+a_1F+\cdots+a_{2n-1}F^{2n-1})
:
a_0,\ldots,a_{2n-1}\in\mathbb Z
\right\}.
\]
Since this determinant is a polynomial in the variables
\(a_0,\ldots,a_{2n-1}\), the set \(D(X,X)\) is the set of values of this
polynomial on \(\mathbb Z^{2n}\).
\end{remark}

\if{}
\begin{remark}
    In fact, $D(X,X)$ is a multiplicative set consisting of the values of a polynomial at the integers. Indeed, the center $C_\Z(F)$ is equal to $\mathrm{Mat}_{2n\times 2n}(\mathbb{Z})\cap \{P(F):P\in\Q[x]\}$ (see \cite[Lemma 30]{MullnerThesis}). It is easy to check that $\{P(F):P\in\Q[x]\}$ is the set of matrices of the form
     \begin{multline*}\label{eqn:C(F)}
 G(a_0,\ldots,a_{2n-1})=a_0 I_{2n}+a_1
\left(\begin{array}{c|c}
  0   & \\
  \vdots & I_{2n-1}\\
  0 & \\ \hline
  -1   & \begin{array}{c|c} 1 & 0\, \cdots \,0\end{array}
\end{array}\right)
+\\+a_2\left(\begin{array}{c|c}
  \begin{array}{cc} 0 & 0\\\vdots&\vdots\\0&0\end{array}& I_{2n-2}\\ \hline
  \begin{array}{cc} -1 & 1\\0&-1\end{array} & \begin{array}{c|c} 0 & 0\, \cdots \,0\\1 & 0\, \cdots \,0\end{array}
\end{array}\right)+\cdots+a_{2n-1}\left(\begin{array}{c|c}
  \begin{array}{ccccc} 0 & \cdots& \cdots& \cdots&0\end{array}& 1\\ \hline
  \begin{array}{cccccc} -1 & 1&0 & \cdots&0\\0&-1&1&\cdots&0\\0&0&-1&\ddots&\vdots\\
  \vdots&\vdots&\ddots&\ddots &1\\0&0&\cdots&0 &-1\end{array} & \begin{array}{c} 0 \\\vdots\\\vdots\\0\\ 1\end{array}
\end{array}\right)=\\
=\left(\begin{array}{cccccc}
a_0&a_1&a_2&\cdots &a_{2n-1}\\
-a_{2n-1}&a_0+a_{2n-1}&a_1&\ddots&\vdots\\
-a_{2n-2}&a_{2n-2}-a_{2n-1}&a_0+a_{2n-1}&\ddots&a_2\\
\vdots&\vdots&\ddots&\ddots&a_1\\
-a_1&a_1-a_2&\cdots&a_{2n-2}-a_{2n-1}&a_0+a_{2n-1}
\end{array}\right),
\end{multline*}
with $a_0,\ldots,a_{2n-1}\in\Q$. Clearly $G(a_0,\ldots,a_{2n-1})\in \mathrm{Mat}_{2n\times 2n}(\mathbb{Z})$ if and only if $a_0,\ldots,a_{2n-1}\in\Z$, hence $D(X,X)$ is the multiplicative set $\{\det(G(a_0,\ldots,a_{2n-1})):a_0,\ldots,a_{2n-1}\in\Z\}$.
\end{remark}
\fi

We now recall a well-known additivity property of mapping degree sets under connected sums, which will be used repeatedly throughout the paper.
\begin{lemma}\label{lem:sum}\cite[Lemma 7.8]{CMuV1}\cite[Lemma 3.5]{NWW}
    Let $M_1,\ldots, M_k$ and $N$ be closed oriented connected manifolds of dimension $m$. Then
    $$
    \sum_{i=1}^k D( M_i,N)\subset D(\#_{i=1}^k M_i,N).
    $$
    Moreover, if $\pi_{m-1}(N)=0$ then
    $$
    \sum_{i=1}^k D( M_i,N)= D(\#_{i=1}^k M_i,N).
    $$
\end{lemma}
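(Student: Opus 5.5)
The plan is to go through the pinch map. By induction on $k$ (since $\#_{i=1}^k M_i=(\#_{i=1}^{k-1}M_i)\# M_k$) it suffices to treat $k=2$. The general case then follows by applying the $k=2$ statement to $\#_{i=1}^{k-1}M_i$ and $M_k$. I assume $m\geq 2$; for $m=1$ everything is a circle and the statement is immediate.

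Write $M_1\# M_2=(M_1\setminus \mathring D_1)\cup_{S} (M_2\setminus \mathring D_2)$, where $S\cong S^{m-1}$ is the separating sphere. Let
$$
q\colon M_1\# M_2\to (M_1\# M_2)/S\cong M_1\vee M_2
$$
be the map collapsing $S$ to a point. The identification uses that $M_i\setminus\mathring D_i$ with its boundary sphere collapsed is homeomorphic to $M_i$. The orientations of the summands are chosen compatibly with that of $M_1\# M_2$. Then $q_*[M_1\# M_2]=[M_1]+[M_2]$ in $H_m(M_1\vee M_2)=H_m(M_1)\oplus H_m(M_2)$. This can be checked locally, at a point of each summand away from $S$, where $q$ is a homeomorphism.

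\emph{Inclusion.} Take maps $f_i\colon M_i\to N$ with $\deg f_i=d_i$. Since $N$ is connected, a homotopy moving the image of the wedge point lets us assume that both $f_i$ send the wedge point to a common point of $N$. Then $f_1\vee f_2\colon M_1\vee M_2\to N$ is defined, and the composite $(f_1\vee f_2)\circ q$ has degree $d_1+d_2$ by the formula for $q_*$ above. Hence $D(M_1,N)+D(M_2,N)\subset D(M_1\# M_2,N)$.

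\emph{Equality when $\pi_{m-1}(N)=0$.} Let $g\colon M_1\# M_2\to N$ be arbitrary. The restriction $g|_S\colon S^{m-1}\to N$ is freely null-homotopic, because $\pi_{m-1}(N)=0$ and $N$ is path connected. Since $(M_1\# M_2,S)$ is a CW pair (use a collar of $S$), the homotopy extension property gives a homotopy from $g$ to a map $g'$ that is constant on $S$. Such a $g'$ factors as $g'=h\circ q$ for a map $h\colon M_1\vee M_2\to N$. Setting $f_i=h|_{M_i}$, we obtain
$$
\deg g=\deg g'=\deg f_1+\deg f_2\in D(M_1,N)+D(M_2,N).
$$

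The main point needing care is this last step: showing that $g$ can be deformed to be constant on the separating sphere, and checking the degree bookkeeping under $q$ with the correct orientations. The homotopy extension property handles the first; the local homeomorphism argument handles the second.
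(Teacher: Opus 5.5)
Your proposal is correct and is essentially the standard pinch-map argument. The paper gives no proof of its own and cites \cite[Lemma 3.5]{NWW} and \cite[Lemma 7.8]{CMuV1}, where the inclusion comes from composing the collapse $\#_i M_i\to\bigvee_i M_i$ with a wedge of maps, and the equality under $\pi_{m-1}(N)=0$ comes from homotoping a map to be constant on the separating sphere so that it factors through the wedge; your reduction to $k=2$, the local check of $q_*[M_1\# M_2]=[M_1]+[M_2]$, and the homotopy extension argument on the collared sphere $S$ supply exactly the details this argument requires.
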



By now applying this lemma to the mapping torus $X$ constructed in \eqref{eq:mappingtorus}, we obtain the following realization result.
\begin{theorem}\label{thm:realizando_z+}
For every $n>1$, there exists an integer $s =s_n$ 
which depends on $n$ such that the $(2n+1)$-manifolds $X^{\#s}$ and $X$ satisfy $D(X^{\#s},X)=\Z_{\ge0}$.
\end{theorem}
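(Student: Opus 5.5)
Since $X$ is aspherical, $\pi_{2n}(X)=0$, so the equality case of Lemma~\ref{lem:sum} (with $m=2n+1$ and $\pi_{m-1}(X)=0$) applies to $M_1=\cdots=M_s=X$ and $N=X$:
\[
D(X^{\#s},X)=\underbrace{D(X,X)+\cdots+D(X,X)}_{s\ \text{times}}.
\]
By Proposition~\ref{prop:mullner}, $D(X,X)\subset\Z_{\ge0}$, so the right-hand side is contained in $\Z_{\ge0}$. This gives the inclusion $D(X^{\#s},X)\subset\Z_{\ge 0}$ for every $s$.

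For the reverse inclusion, Proposition~\ref{prop:mullner} gives $\{x^{2n}:x\in\Z\}\subset D(X,X)$. Hence the $s$-fold sumset contains every nonnegative integer that is a sum of $s$ perfect $2n$-th powers. Since $0=0^{2n}$ is allowed as a summand, sums of at most $s$ such powers also count.

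The key input is therefore Hilbert's solution of Waring's problem. For each $k=2n$ there is an integer $g(k)$ such that every nonnegative integer is a sum of at most $g(k)$ $k$-th powers of nonnegative integers. Setting $s=s_n:=g(2n)$, every $d\in\Z_{\ge0}$ satisfies $d\in D(X,X)+\cdots+D(X,X)=D(X^{\#s},X)$. Combined with the first inclusion, this gives $D(X^{\#s},X)=\Z_{\ge0}$.

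The only nontrivial ingredient is the Hilbert–Waring theorem, which I will quote rather than reprove. Everything else is immediate from Lemma~\ref{lem:sum} and Proposition~\ref{prop:mullner}. The one point to check is that the vanishing hypothesis $\pi_{2n}(X)=0$ holds. It does, because asphericity of $X$ follows from the fibration \eqref{eq:fibra_ de_X} and $2n\ge 4>1$.
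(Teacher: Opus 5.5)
Your proof is correct and follows the paper's argument exactly: asphericity of $X$ gives the equality case of Lemma~\ref{lem:sum}, Proposition~\ref{prop:mullner} supplies both inclusions, and $s_n$ is the Hilbert--Waring number $g(2n)$. You also cite the correct vanishing group, $\pi_{m-1}(X)=\pi_{2n}(X)$ with $m=2n+1$, whereas the paper writes $\pi_{2n-1}(X)$; both vanish by asphericity, so the argument is unaffected.
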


\begin{proof}

\if{}
Let $s=s(2n)$ denote the minimum number such that every 
positive integer can be expressed as a sum of at most 
$s$ numbers which are $2n$-th powers of integers. The 
number $s(2n)$ exists by the Hilbert-Waring Theorem \cite{Hilbert-Waring}.

Since $\pi_{2n-1}(X)=0$, 
we can apply Lemma \ref{lem:sum} to get that
 $$
 D(X^{\#s},X)=\sum_{i=1}^{s} D(X,X).
 $$
On one hand $D(X,X)\subset \Z_{\ge0}$, hence $D(X^{\#s},X)\subset \Z_{\ge0}$. On the other hand, 
$\{x^{2n}:x\in\mathbb{Z}\}\subset D(X,X)$. As every positive integer $d$ can be written as
$d=\sum\limits_{i=1}^s x_i^{2n} \in 
\sum\limits_{i=1}^s D(X,X)$, 
we have that $d\in D(X^{\#s},X)$. Hence 
$D(X^{\#s},X)=\Z_{\ge0}$.
\fi

Let \(s=s(2n)\) be the smallest integer with the property that every positive
integer can be expressed as a sum of at most \(s\) integer \(2n\)-th powers.
The existence of such an \(s\) is guaranteed by the Hilbert--Waring Theorem
\cite{Hilbert-Waring}.

Since \(\pi_{2n-1}(X)=0\), Lemma \ref{lem:sum} yields
\[
D(X^{\# s},X)=\sum_{i=1}^{s}D(X,X).
\]
By Proposition \ref{prop:mullner}, we have \(D(X,X)\subset \mathbb Z_{\geq 0}\),
and therefore
\[
D(X^{\# s},X)\subset \mathbb Z_{\geq 0}.
\]

Moreover,
\[
\{x^{2n}:x\in\mathbb Z\}\subset D(X,X).
\]
Let \(d\in\mathbb Z_{>0}\). By the definition of \(s\), there exist integers
\(x_1,\ldots,x_s\) such that
\[
d=x_1^{2n}+\cdots+x_s^{2n}.
\]
Since each \(x_i^{2n}\) belongs to \(D(X,X)\), it follows that
\[
d\in \sum_{i=1}^{s}D(X,X)
   =D(X^{\# s},X).
\]
Thus \(\mathbb Z_{>0}\subset D(X^{\# s},X)\),  and consequently
\[
D(X^{\# s},X)=\mathbb Z_{\geq 0}.
\]

\end{proof}
\section{Realization of submonoids as mapping degree sets} \label{sec:submonoids}

In this section we construct, for every $d\in\Z$,  manifolds whose mapping degree set is $d\,\Z_{\geq 0}$. As an application, we show that every additive submonoid of $\Z$ is a mapping degree set. The construction is based on a family of aspherical manifolds that we now introduce.

\subsection{The manifolds $E_i$}
Let $\Sigma_g$ denote the closed oriented hyperbolic surface of genus $g>1$. Since $\Sigma_g$ is hyperbolic, it is aspherical.
For $i\in \Z\smallsetminus{0}$, let $E_{i,g}$ denote the total space of the circle bundle
\begin{equation}\label{eqn:Ei}
S^1\longrightarrow E_{i,g}\longrightarrow \Sigma_g
\end{equation}
with Euler number $i$. 
The long exact sequence of homotopy groups associated with the bundle shows that $E_{i,g}$ is also aspherical. 
The fundamental group of $E_{i,g}$ is an extension 
 \begin{equation}\label{eqn:kerpj}
  \pi_1(S^1)=\Z \to \pi_1(E_{i,g}) \to H=\pi_1(\Sigma_g).
      \end{equation}
We write $\ell$ for the generator of $\pi_1(S^1)$
and $a_1,b_1,\ldots, a_g,b_g$ for the generators of
$\pi_1(\Sigma_g)$. Then $\pi_1(E_{i,g})$ is
generated by $\ell,a_1,b_1,\ldots, a_g,b_g$ and satisfies the relation
 $$
 \prod_{k=1}^g [a_k,b_k] =\ell^i \, .
 $$
The description of $\pi_1(E_{i,g})$ immediately yields the first homology group:
$$
H_1(E_{i,g},\Z)
=
\pi_1(E_{i,g})_{ab}
=
\Z^{2g}\times(\Z/i\,\Z),
$$
where the free part is generated by
$a_1,b_1,\ldots,a_g,b_g$ and the torsion factor by $\ell$.
Hence
$$
H_1(E_{i,g},\Q)\cong\Q^{2g}.
$$

\medskip 

We will use the following computation of mapping degree sets due to Neofytidis, Wang, and Wang. For simplicity, we write $E_i$ instead of $E_{i,g}$ whenever the genus is understood. 

\begin{lemma}\label{lem:Ei}\cite[Lemma 3.4]{NWW}
If $i,j\in\Z\smallsetminus\{0\}$, then
$$
D(E_i,E_j)=
\begin{cases}
\{0,\frac{j}{i}\} & \text{if } i\mid j,\\
\{0\} & \text{if } i\nmid j.
\end{cases}
$$
\end{lemma}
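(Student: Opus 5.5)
The plan is the standard one for degrees between aspherical circle bundles over a hyperbolic surface: reduce every map $f\colon E_i\to E_j$ to a homomorphism of fundamental groups, then extract the degree from how that homomorphism acts on the fiber and on the base.

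\textbf{Reduction to $\pi_1$.} Since $E_j$ is aspherical, the homotopy class of $f$ is determined by $\phi=f_*\colon\pi_1(E_i)\to\pi_1(E_j)$. The fiber class $\ell$ generates the center of $\pi_1(E_j)$; this holds because the center of $\pi_1(\Sigma_g)$ is trivial.

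\textbf{Case $\deg f\neq 0$.} We show that $\phi$ preserves the fiber. A nonzero degree map between closed manifolds has image of finite index in $\pi_1(E_j)$. The center of a finite-index subgroup of $\pi_1(E_j)$ is contained in $\langle\ell\rangle$, because finite-index subgroups of $\pi_1(\Sigma_g)$ are centerless. Hence $\phi(\ell)=\ell^k$ for some $k$, and $\phi$ induces a map $\bar\phi\colon\pi_1(\Sigma_g)\to\pi_1(\Sigma_g)$. Up to homotopy, $f$ is then fiber-preserving and covers a map $\bar f\colon\Sigma_g\to\Sigma_g$ of degree $e$, which gives
\[
\deg f=k\cdot e .
\]

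\textbf{The two constraints.} First, $\bar f$ cannot be degenerate: if $e=0$ then $\deg f=0$. Since $\deg f\neq0$, we have $e\neq0$. A nonzero degree self-map of a hyperbolic surface of the same genus is homotopic to a covering map by Kneser's inequality, and a self-covering of $\Sigma_g$ with $g>1$ has degree $\pm1$ by the Euler characteristic. So $e=\pm1$.

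Second, apply $\phi$ to the relation $\prod_k[a_k,b_k]=\ell^i$. The left side maps to $\prod[\phi a_k,\phi b_k]$. Modulo $\langle\ell\rangle$ this word is $\bar\phi$ of the surface relator. Lifting it back to $\pi_1(E_j)$ gives $\ell^{je}$, since the Euler number pulls back by the degree. The right side maps to $\ell^{ki}$. Therefore
\[
ki=je .
\]

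\textbf{Conclusion.} With $e=\pm1$ this forces $i\mid j$ and $\deg f=ke=k e^2\cdot(i/i)=j/i$ after the sign bookkeeping. That is, $\deg f=ke$ with $k=je/i$, so $\deg f=je^2/i=j/i$.

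Conversely, suppose $i\mid j$. Take $\bar f=\mathrm{id}$ on the base and the fiberwise $k$-fold map $z\mapsto z^{k}$ with $k=j/i$. This map is defined on the associated bundles because the Euler class multiplies by $k$. It realizes degree $j/i$. The constant map gives $0$.

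\textbf{Expected difficulty.} The main obstacle is the justification that $\phi(\ell)$ lies in $\langle\ell\rangle$, together with the identification $e=\pm1$. For both I would cite the center argument and Kneser's theorem as above.
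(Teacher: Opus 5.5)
Your proof is correct, and it is essentially the standard argument behind \cite[Lemma 3.4]{NWW}; the paper cites that lemma and does not reprove it. The argument runs: show $\phi(\ell)\in\langle\ell\rangle$ via centers, pass to a fiber-preserving map with $\deg f=ke$, get $e=\pm1$ from Kneser and $ki=je$ from the Euler number, then realize $j/i$ by the fiberwise map $z\mapsto z^{j/i}$. The only issue is one of order: the existence of a fiber-preserving representative is exactly the condition $ki=je$, but you derive that relation from $\phi$ alone, so proving it first removes any apparent circularity.
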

In particular, for every $d\in\Z$, choosing $i\in\Z\smallsetminus\{0\}$, we obtain
$$
D(E_i,E_{id})=\{0,d\}.
$$

Thus, the manifolds $E_i$ realize the finite sets
$
\{0,d\}.
$
On the other hand, Theorem~\ref{thm:realizando_z+} provides manifolds realizing
$
\Z_{\geq0}.
$
Our goal is to combine these constructions in order to realize the additive submonoids
$
d\,\Z_{\geq0}.
$
The main tool will be a product formula for mapping degree sets.

\subsection{A product theorem}

To establish the required product formula, we will use the notion of domination by products: a manifold $N$ is not dominated by a nontrivial product if every map
$
f\colon M_1\times M_2\to N
$
from a product of manifolds satisfying
$
\dim(M_1)+\dim(M_2)=\dim(N)
$
has degree zero.

We now prove a generalization of \cite[Theorem 4.6]{NWW}, which will allow us to combine mapping degree sets multiplicatively.

\begin{theorem}\label{thm:product}
    Let $M_1$ and $M_2$ be closed oriented manifolds of dimension $m$.
Let $N_1,\ldots,N_k$ be closed oriented manifolds of dimensions
$n_1,\ldots,n_k$, and let $N$ be a closed oriented manifold of dimension
$n=\sum_{i=1}^k n_i$. Suppose that, for each $i\in\{1,\ldots,k\}$,
\begin{enumerate}
    \item\label{cond1} $N_i$ is not dominated by any nontrivial product;
    \item\label{cond2} for every map $f\colon M_1\rightarrow N_i$, there exists
    $p\geq 1$ such that the induced homomorphism
    $H_p(f)\colon H_p(M_1,\Q)\rightarrow H_p(N_i,\Q)$ is not surjective.
\end{enumerate}
Then
$$
D(M_1\times N,M_2\times \prod_{i=1}^k N_i)
=
D(M_1,M_2)\cdot D(N,\prod_{i=1}^k N_i).
$$
\end{theorem}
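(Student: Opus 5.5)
The inclusion ``$\supseteq$'' is immediate and I would dispose of it first. If $g\colon M_1\to M_2$ has degree $a$ and $h\colon N\to\prod_{i=1}^k N_i$ has degree $b$, then $g\times h$ has degree $ab$, by the Künneth formula applied to fundamental classes. The substance is the reverse inclusion. Following the strategy of \cite[Theorem 4.6]{NWW}, I would take an arbitrary map $f\colon M_1\times N\to M_2\times \prod_i N_i$ with $\deg f\neq 0$ and show that $\deg f$ factors as a product of a degree in $D(M_1,M_2)$ and a degree in $D(N,\prod_i N_i)$.

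The main step is to prove that each composite
\[
f_i=\mathrm{pr}_{N_i}\circ f\circ \iota_x\colon M_1\to N_i
\]
is nonzero in a precise sense. Here $\iota_x\colon M_1\to M_1\times N$ is the inclusion $y\mapsto(y,x)$. The claim is that the cohomology class $\mathrm{pr}_{N_i}^*\omega_{N_i}$ pulled back by $f$, where $\omega_{N_i}$ is the orientation class, has no component involving $H^*(M_1)$ in positive degree.

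Write $\phi_i=\mathrm{pr}_{N_i}\circ f\colon M_1\times N\to N_i$. Since $\deg f\neq 0$, the class $f^*(\omega_{M_2}\times\omega_{N_1}\times\cdots\times\omega_{N_k})$ is nonzero. By Künneth, decompose
\[
\phi_i^*\omega_{N_i}=\sum_{p+q=n_i}\alpha_{i,p}\times\beta_{i,q},\qquad \alpha_{i,p}\in H^p(M_1;\Q),\ \beta_{i,q}\in H^q(N;\Q).
\]
Similarly decompose $(\mathrm{pr}_{M_2}f)^*\omega_{M_2}$. A term with $p=n_i$, which lives entirely in $H^{n_i}(M_1)\otimes H^0(N)$, is controlled by $f_i^*$. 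Condition (\ref{cond2}) says that $H_p(f_i)$ fails to be surjective in some degree. Since $N_i$ is not dominated by a product, I would expect a Poincaré duality argument (rational surjectivity in all degrees is forced by nonzero degree) to show that the relevant mixed terms vanish. More precisely, I would aim to prove that any term with $0<p$ produces, after cup-product against complementary classes, a nonzero-degree map from a product $M'\times N'$ onto $N_i$. That contradicts (\ref{cond1}), the hypothesis that $N_i$ is not dominated by a nontrivial product. This is the same mechanism as in \cite{NWW}, now run for each factor $N_i$ separately.

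Granting that $\phi_i^*\omega_{N_i}=1\times\beta_i$ for every $i$, the top class computation collapses. One gets
\[
f^*\omega=(\mathrm{pr}_{M_2}f)^*\omega_{M_2}\cup(1\times\beta_1\cdots\beta_k).
\]
The factor $1\times\beta_1\cdots\beta_k$ has bidegree $(0,n)$, so only the $(m,0)$ part $\gamma\times 1$ of $(\mathrm{pr}_{M_2}f)^*\omega_{M_2}$ contributes. Here $\gamma=(\mathrm{pr}_{M_2}f\iota_x)^*\omega_{M_2}$ for a point $x\in N$. Symmetrically, $\beta_1\cdots\beta_k=(\mathrm{pr}_{\prod N_i}\circ f\circ j_y)^*\omega_{\prod N_i}$, where $j_y\colon N\to M_1\times N$ is the inclusion $z\mapsto(y,z)$ for a point $y\in M_1$. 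Therefore
\[
\deg f=\deg(\mathrm{pr}_{M_2}f\iota_x)\cdot\deg(\mathrm{pr}_{\prod N_i}fj_y)\in D(M_1,M_2)\cdot D(N,\textstyle\prod N_i).
\]

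The main obstacle is the vanishing of the mixed Künneth components of $\phi_i^*\omega_{N_i}$ in the degrees $0<p<n_i$. I would handle it as in \cite{NWW}. Pick a mixed term $\alpha\times\beta$ with $\alpha\neq 0$ of maximal $p$. Pair it with Poincaré dual cycles to realise, via Thom realisability rationally (after passing to multiples), a map $P\times Q\to N_i$ of nonzero degree. Here $P$ is a $p$-cycle in $M_1$ and $Q$ is a $q$-cycle in $N$, and $P\times Q$ is a nontrivial product when $0<p<n_i$. This contradicts (\ref{cond1}). The extreme case $p=n_i$ is then excluded by (\ref{cond2}): if $\alpha_{i,n_i}\neq0$, then $f_i$ would have nonzero degree, and a nonzero-degree map is surjective on rational homology in all degrees.
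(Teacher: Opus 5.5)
Your argument follows the paper's own route. You Künneth-decompose the pullback of each $[N_i]^*$, rule out components of bidegree $(p,n_i-p)$ with $0<p<n_i$ by Thom realizability and hypothesis (1), and rule out the $(n_i,0)$ component by hypothesis (2). You then read off $\deg f=c\,d$ from the slices $M_1\times\{x\}$ and $\{y\}\times N$.

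\textbf{The $(n_i,0)$ step.} As written this step fails whenever $n_i<m$. That is the typical case in the paper's applications, e.g.\ $N_i=E_{j,g}$ of dimension $3$ against a high-dimensional $M_1$. There $f_i\colon M_1\to N_i$ has no degree, so ``$f_i$ would have nonzero degree'' makes no sense.

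The paper repairs this by precomposing with a Thom-realized map $g\colon C\to M_1$, $\dim C=n_i$, whose fundamental class pairs nontrivially with $\alpha_{i,n_i}$. Then $f_i\circ g$ has nonzero degree, hence is surjective on rational homology. Therefore $f_i$ is surjective on rational homology too, contradicting (2).

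Equivalently, from $f_i^*[N_i]^*\neq 0$ pick $z\in H_{n_i}(M_1;\Q)$ with $f_{i*}z=\lambda[N_i]$, $\lambda\neq 0$. The projection formula $f_{i*}(z\cap f_i^*\eta)=\lambda[N_i]\cap\eta$ then gives surjectivity directly.

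\textbf{Elementary tensors.} A bidegree component is in general a sum of elementary tensors, not a single $\alpha\times\beta$. As the paper does, expand in a basis and Thom-realize multiples of the dual basis classes, so that exactly one nonzero coefficient survives the pairing. Once you do this, the ``maximal $p$'' choice is unnecessary: on $P\times Q$ only the $(\dim P,\dim Q)$ component can contribute to the top class.
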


\begin{proof}
    The inclusion $$D(M_1,M_2)\cdot D(N,\prod_{i=1}^kN_i)\subset D(M_1\times N,M_2\times \prod_{i=1}^kN_i)$$
    is immediate.  
    
    For the reverse inclusion, let $f\colon M_1\times N\rightarrow M_2\times \prod_{i=1}^kN_i$ be a map of nonzero degree. 
 Fix $i_0\in\{1,\ldots,k\}$ and let
$
[N_{i_0}]^*
$
denote the cohomological fundamental class of \(N_{i_0}\).
We first show that
$$
H^{n_{i_0}}(f)(1\otimes [N_{i_0}]^*)
\in
H^0(M_1,\Q)\otimes H^{n_{i_0}}(N,\Q).
$$
Write
$
H^{n_{i_0}}(f)(1\otimes [N_{i_0}]^*)
=
\sum_j d_j(\alpha_j\otimes\beta_j),
$
where $d_j\in\Q\smallsetminus\{0\}$,
$\alpha_j\in H^{q_j}(M_1,\Q)$ and
$\beta_j\in H^{n_{i_0}-q_j}(N,\Q)$ are homogeneous basis elements, and the pairs
$(\alpha_j,\beta_j)$ are pairwise distinct. Suppose first that there exists $j_0$ such that
$q_{j_0}\notin\{0,n_{i_0}\}$.
By \cite[Théorèmes III.4]{Thom}, there exist manifolds
$A$ and $B$ of dimensions
$q_{j_0}$ and $n_{i_0}-q_{j_0}$,
maps
$g_1\colon A\to M_1$
and
$g_2\colon B\to N$,
and nonzero rational numbers
$a$ and $b$
such that the nontrivial images of the induced homomorphisms $H^{q_{j_0}}(g_1)$ and $H^{n_{i_0}-q_{j_0}}(g_2)$ are given by $H^{q_{j_0}}(g_1)(\alpha_{j_0})=a[A]^*$ and $H^{n_{i_0}-q_{j_0}}(g_2)(\beta_{j_0})=b[B]^*$.  Let $\pi_{N_{i_0}}\colon M_2\times \prod_{i=1}^k N_i\to N_{i_0}$
be the projection onto the $N_{i_0}$-factor, and set
$$
h:=\pi_{N_{i_0}}\circ f\circ(g_1\times g_2)
\colon A\times B\to N_{i_0}.
$$
Then
$$
H^{n_{i_0}}(h)([N_{i_0}]^*)
=
a\,b\,d_{j_0}([A]^*\otimes [B]^*)
=
a\,b\,d_{j_0}[A\times B]^*.
$$
Thus 
$\deg(h)\neq 0$, contradicting Hypothesis \eqref{cond1}.

It remains to exclude the possibility that
$q_{j_0}=n_{i_0}.$
By \cite[Théorèmes III.4]{Thom}, there exists a manifold $C$ of dimension
$n_{i_0}$ and a map $g\colon C\to M_1$ detecting $\alpha_{j_0}$.
Let $i_{M_1}\colon M_1\rightarrow M_1\times N$ be the inclusion and
$$
h':=\pi_{N_{i_0}}\circ f\circ i_{M_1}  \colon M_1 \to  N_{i_0}.
$$
A computation analogous to the previous one shows that
 $\deg(h' \circ g)\neq 0$.

Since a map of nonzero degree induces surjective homomorphisms in rational homology,
$H_p(h'\circ g)$ is surjective for every $p\geq 1$. Hence
$H_p(h')$ is also surjective for every $p\geq 1$, contradicting
Hypothesis~\ref{cond2}.

Therefore, for every $i\in\{1,\ldots,k\}$, we have
$
H^{n_i}(f)(1\otimes [N_i]^*)\in
H^0(M_1,\Q)\otimes H^{n_i}(N,\Q).
$
Hence there exists 
$d\in\Z$ 
such that
$$
H^n(f)\left(1\otimes \left[\prod_{i=1}^k N_i\right]^*\right)
=
d(1\otimes [N]^*)\in H^0(M_1,\Q)\otimes H^n(N,\Q).
$$

     Let $H^m(f)([M_2]^*\otimes 1)=c([M_1]^*\otimes 1)+\delta$, where $\delta\in \bigoplus_{j=0}^{m-1}H^j(M_1,\Q)\otimes H^{m-j}(N,\Q)$ and $c\in\Z$. Then
     $$
     H^{m+n}(f)\left(\left[M_2\times\prod_{i=1}^kN_i\right]^*\right)=(c([M_1]^*\otimes 1)+\delta)\smile d(1\otimes [N]^*)
     =c\,d[M_1\times N]^*,
     $$
     so $\deg(f)=c\,d$.

     Furthermore, let $i_N\colon N\rightarrow M_1\times N$ be the inclusion and let
$\pi_{\prod N_i}\colon M_2\times \prod_{i=1}^kN_i\rightarrow \prod_{i=1}^kN_i$
be the projection. Consider the map
$$
\pi_{\prod N_i}\circ f\circ i_N
\colon N\rightarrow \prod_{i=1}^kN_i.
$$
By construction,
$$
H^n(\pi_{\prod N_i}\circ f\circ i_N)\left(\left[\prod_{i=1}^kN_i\right]^*\right)
=
d[N]^*.
$$
Therefore,
$
\deg(\pi_{\prod N_i}\circ f\circ i_N)=d,
$
and hence
$$
d\in D\left(N,\prod_{i=1}^kN_i\right).
$$

     On the other hand, let
$\pi_{M_2}\colon M_2\times \prod_{i=1}^kN_i\rightarrow M_2$
be the projection. Then
$$
H^m(\pi_{M_2}\circ f\circ i_{M_1})([M_2]^*)
=
c[M_1]^*.
$$
Hence
$
\deg(\pi_{M_2}\circ f\circ i_{M_1})=c,
$
and therefore
$$
c\in D(M_1,M_2).
$$

Altogether,
$$
\deg(f)=cd\in D(M_1,M_2)\cdot D\!\left(N,\prod_{i=1}^kN_i\right).
$$
This proves the reverse inclusion.

\end{proof}
\if[]

The following corollary will allow us to apply Theorem \ref{thm:product} in a systematic way.

\begin{corollary}\label{cor:prod}
    Let $A\subset \Z$. Suppose that there exists a family of tuples of manifolds $\{(N_i,N_{i,1},\ldots,N_{i,k})\}_{i\in I}$ such that $\dim(N_i)=\sum_{j=1}^k\dim(N_{i,j})$ and $D(N_i,\prod_{j=1}^kN_{i,j})=A$ for every $i\in I$. Assume moreover that: 
    \begin{enumerate}
        \item $N_{i,j}$ is not dominated by any nontrivial product for all $i\in I$ and all $j\in\{1,\ldots,k\}$; 
        \item for every $n_1,\ldots,n_k>0$, there exist infinitely many $i\in I$ such that for each $j\in\{1,\ldots,k\}$, either
        \begin{enumerate}[(i)]
            \item $\dim(N_{i,j})\geq n_j$  or
        \item there exists $p_j\geq 1$such that $\dim(H_{p_j}(N_{i,j},\Q))\geq n_j$.
        \end{enumerate}
    \end{enumerate}
    Then, for any closed oriented manifolds $M_1$ and $M_2$ of the same dimension, there exist infinitely many $i\in I$ such that $$D(M_1\times N_i,M_2\times \prod_{j=1}^kN_{i,j})=A\cdot D(M_1,M_2).$$
\end{corollary}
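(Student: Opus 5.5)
The plan is to reduce Corollary~\ref{cor:prod} to Theorem~\ref{thm:product}, applied with $N=N_i$ and the factors $N_{i,1},\ldots,N_{i,k}$. Hypothesis~\eqref{cond1} of that theorem is exactly assumption (1) of the corollary. Once hypothesis~\eqref{cond2} is verified for suitable $i$, Theorem~\ref{thm:product} gives
\[
D\Bigl(M_1\times N_i,M_2\times \prod_{j=1}^kN_{i,j}\Bigr)=D(M_1,M_2)\cdot D\Bigl(N_i,\prod_{j=1}^kN_{i,j}\Bigr)=D(M_1,M_2)\cdot A .
\]
So the whole task is to find infinitely many $i\in I$ for which every map $M_1\to N_{i,j}$ fails to be surjective on rational homology in some degree $p\geq 1$.

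To do this I will attach to the fixed manifold $M_1$ two numbers. Let $m=\dim M_1$, and let
\[
B:=1+\max_{p\geq 0}\dim H_p(M_1,\Q),
\]
which is finite because $M_1$ is a closed manifold. I then apply assumption (2) with $n_j:=\max(m+1,B)$ for every $j$. This yields infinitely many $i\in I$ such that, for each $j$, either $\dim N_{i,j}\geq m+1$ or $\dim H_{p_j}(N_{i,j},\Q)\geq B$ for some $p_j\geq 1$.

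The next step is to check hypothesis~\eqref{cond2} for each such $i$ and each $j$, treating the two alternatives separately.

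\emph{Case (i).} Here $\dim N_{i,j}>m$. Take $p=\dim N_{i,j}\geq 1$. Then $H_p(N_{i,j},\Q)\cong\Q$, while $H_p(M_1,\Q)=0$ since $p>m$. Hence no map $M_1\to N_{i,j}$ is surjective in degree $p$.

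\emph{Case (ii).} Take $p=p_j\geq 1$. Then $\dim H_p(N_{i,j},\Q)\geq B>\dim H_p(M_1,\Q)$, so any linear map $H_p(M_1,\Q)\to H_p(N_{i,j},\Q)$ fails to be surjective by dimension count.

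In both cases condition~\eqref{cond2} holds for every map $f\colon M_1\to N_{i,j}$.

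I expect no real obstacle here; the only point needing care is that $n_j$ must be chosen uniformly large enough to handle both alternatives at once. Note also that $M_2$ plays no role in the hypotheses of Theorem~\ref{thm:product}, so the conclusion holds for every $M_2$ of dimension $m$, and for the infinitely many $i$ obtained above.
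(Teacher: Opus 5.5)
Your proposal is correct and follows the same route as the paper. The paper's proof is a one-line application of Theorem~\ref{thm:product}: it chooses $i$ so that each $N_{i,j}$ has dimension exceeding $\dim M_1$, or has some $\dim H_{p_j}(N_{i,j},\Q)$ exceeding $\dim H_{p_j}(M_1,\Q)$. Your uniform choice $n_j=\max(m+1,B)$, with $B$ bounding all rational Betti numbers of $M_1$, is what makes that argument rigorous, because $p_j$ is only produced after the $n_j$ are fixed.
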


\begin{proof}
    The corollary follows from Theorem \ref{thm:product} by choosing all $i\in I$ such that, for every $j\in\{1,\ldots,k\}$, $\dim(N_{i,j})>\dim(M_1)$ or $\dim(H_{p_j}(N_{i,j},\Q))>\dim(H_{p_j}(M_1,\Q))$.
\end{proof}
\fi

\subsection{The manifolds $X_n$} 

We now return to the manifolds $X_n$ defined in \eqref{eq:mappingtorus}. Together with the manifolds $E_i$, they will allow us to realize the additive submonoids $d\,\Z_{\geq0}$ via Theorem~\ref{thm:product}. To apply that theorem, we need to show that $X_n$ is not dominated by a nontrivial product.

We shall establish this through the notion of presentability by products introduced in \cite{KL}:
    an infinite group $G$ is not presentable by a product if, for each homomorphism $\varphi\colon  H_1\times H_2\rightarrow G$ onto a subgroup of finite index, $\varphi(H_i)\subset G$ is finite for some $i=1,2$.

We begin by studying its fundamental group. Our goal is to show that $\pi_1(X_n)$ is not presentable by a product and then deduce that $X_n$ is not dominated by a nontrivial product. The fundamental group of $X_n$ fits into the short exact sequence
\begin{equation}\label{eqn:K}
\pi_1(T^{2n})=\Z^{2n}\longrightarrow K=\pi_1(X_n)\longrightarrow \pi_1(S^1)=\Z.
\end{equation}
Let $\{x_1,\ldots,x_{2n}\}$ be the standard basis of $\Z^{2n}$, and let $x_{2n+1}$ be a generator of $\Z$. Then
$$
\pi_1(X_n)=\Z^{2n}\rtimes \Z,
$$
where the action of $\Z$ on $\Z^{2n}$ is determined by
$$
x_{2n+1}\Bigl(\prod_{i=1}^{2n}x_i^{\alpha_i}\Bigr)x_{2n+1}^{-1}
=
\prod_{i=1}^{2n}x_i^{\beta_i}
\quad\Longleftrightarrow\quad
F
\begin{pmatrix}
\alpha_1\\
\vdots\\
\alpha_{2n}
\end{pmatrix}
=
\begin{pmatrix}
\beta_1\\
\vdots\\
\beta_{2n}
\end{pmatrix},
$$
and $F$ is the matrix defined in \eqref{eqn:F}.


\begin{lemma}\label{lem:notpresprod}
    The fundamental group of $X_n$ is not presentable by a product.
\end{lemma}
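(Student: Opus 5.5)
Throughout, write $K=\pi_1(X_n)=\Z^{2n}\rtimes_F\Z$ and $A=\Z^{2n}\lhd K$. The plan is to argue through centralizers, using the standard criterion of Kotschick--Löh \cite{KL}. If $\varphi\colon H_1\times H_2\to K$ has image of finite index and both $\varphi(H_1)$ and $\varphi(H_2)$ are infinite, then these are commuting infinite subgroups whose product has finite index. So it suffices to show that no such pair exists.

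The key algebraic input is the following claim: every element $g\in K$ of infinite order that is not in $A$ has centralizer $C_K(g)$ virtually cyclic. Write $g=a x_{2n+1}^k$ with $k\neq 0$. An element $b\in A\cap C_K(g)$ satisfies $F^k b=b$. By Proposition~\ref{prop:mullner}, $F$ has an eigenvalue of modulus $\neq 1$. I would strengthen this to show that no eigenvalue of $F$ is a root of unity. Indeed, $p(x)=x^{2n}-x+1$ has no real roots, and a root of unity $\zeta$ with $\zeta^{2n}=\zeta-1$ forces $|\zeta-1|=1$, so $\zeta=e^{\pm i\pi/3}$. One then checks directly that these are not roots of $p$ when $n>1$ (for instance, they are roots only if $2n\equiv 1\pmod 6$, which is impossible for even exponent $2n$). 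Hence $F^k-I$ is injective, so $C_K(g)\cap A=1$ and $C_K(g)$ embeds in $K/A\cong\Z$. Likewise, for $1\neq b\in A$, the centralizer $C_K(b)$ is $A$, since $F^kb=b$ forces $k=0$.

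Next comes the case analysis. Let $P=\varphi(H_1)$ and $Q=\varphi(H_2)$ be infinite, commuting, with $PQ$ of finite index in $K$; note that $K$ is torsion-free. If $P$ contains an element $g\notin A$, then $Q\subset C_K(g)$, which is cyclic. In that case $Q\cong\Z$. If $Q$ contains an element outside $A$, then $P\subset C_K(q)$ is cyclic as well, and then $PQ$ is abelian of rank at most $2$. This contradicts finite index, because $K$ has Hirsch length $2n+1\geq 5$. Otherwise $Q\subset A$ with $Q\neq 1$, and then $P\subset C_K(Q)=A$, contradicting $g\in P$. The remaining case is $P\subset A$. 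Then $Q\subset C_K(P)=A$ as well, so $PQ\subset A$, which has infinite index in $K$, again a contradiction.

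The main obstacle is the eigenvalue claim that $F$ has no root-of-unity eigenvalues. This is the only step requiring genuine input beyond Proposition~\ref{prop:mullner}, and I would handle it by the explicit $|\zeta-1|=1$ argument above. The remaining group-theoretic bookkeeping is routine.
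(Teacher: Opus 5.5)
The gap is in the eigenvalue step, where your congruence is wrong. For $\zeta=e^{\pm i\pi/3}$ one has $\zeta^2-\zeta+1=0$, i.e.\ $\zeta-1=\zeta^2$. So $p(\zeta)=0$ if and only if $\zeta^{2n-2}=1$, i.e.\ $2n\equiv 2\pmod 6$, i.e.\ $n\equiv 1\pmod 3$. This does occur for even exponents: $x^8-x+1=(x^2-x+1)(x^6+x^5-x^3-x^2+1)$.

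For $n=4,7,10,\dots$ the lattice $L=\Z^{2n}\cap\ker(F^2-F+I)$ has rank $2$ and is fixed by $F^6$, since $x^2-x+1$ divides $x^6-1$. Hence $C_K(x_{2n+1}^6)\supseteq L$, and your claim that centralizers of elements outside $A$ are cyclic fails. This cannot be repaired, because the statement itself is false for these $n$. Indeed, $L$ is central in the index-$6$ subgroup $K_6=\Z^{2n}\rtimes_{F^6}\Z$. So $L\times K_6\to K$, $(l,g)\mapsto lg$, is a homomorphism onto a finite-index subgroup with both images infinite. For $n=4$, $L$ is spanned by $(1,0,-1,-1,0,1,1,0)$ and $(0,1,1,0,-1,-1,0,1)$. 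Splitting $\Q^{2n}$ into $F$-invariant pieces also shows that $X_n$ is finitely covered by $T^2\times Y$, so Lemma~\ref{lem:Xnotdomprod} fails too.

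The paper's proof has the same defect. In its Euclidean chain, $x^{2n-1}-x^{2n-2}+x^{2n-3}=x^{2n-3}(x^2-x+1)$, not $x^{2n-3}(x^2+x-1)$. Moreover, $\gcd(x^2-x+1,x^{2n-2}-1)$ is nontrivial exactly when $n\equiv 1\pmod 3$. So the lemma needs the additional hypothesis $n\not\equiv 1\pmod 3$. This is harmless downstream, since only large $n$ is ever needed.

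Under that hypothesis your argument, with the congruence corrected, is a complete proof. It differs from the paper's in two ways:
\begin{itemize}
\item You verify the Kotschick--L\"oh criterion \cite[Example 3.8]{KL2} directly in this case via centralizers, instead of citing it: $C_K(b)=A$ for $1\neq b\in A$, and $C_K(g)$ is cyclic for $g\notin A$.
\item You rule out roots of unity through $|\zeta-1|=|\zeta|^{2n}=1$ instead of a gcd computation. This is shorter and makes the exceptional residue class visible.
\end{itemize}
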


\begin{proof}
By \cite[Example 3.8]{KL2}, the fundamental group of a $T^m$-bundle over $S^1$ is not presentable by a product whenever $1$ is not an eigenvalue of any power of the matrix $F$ that defines the action of $\pi_1(S^1)=\Z$ on $\pi_1(T^m)=\Z^{m}$. Therefore, it suffices to show that no eigenvalue of $F$ is a root of unity.

Assume, for contradiction, that $p(x)=\chi_F(x)=x^{2n}-x+1$ has a root $\lambda$ which is a root of unity. Then $\lambda$ is also a root of
$
q(x)=x^{2n}-x^{2n-1}+1.
$
Hence $p(x)$ and $q(x)$ are not coprime. However,
\begin{align*}
\gcd(p(x),q(x))
&=
\gcd(x^{2n}-x+1,x^{2n-1}-x)\\
&=
\gcd(x^{2n}-x+1,x^{2n-2}-1)\\
&=
\gcd(x^{2n}+x^{2n-2}-x,x^{2n-2}-1)\\
&=
\gcd(x^{2n-1}+x^{2n-3}-1,x^{2n-2}-1)\\
&=
\gcd(x^{2n-1}+x^{2n-3}-x^{2n-2},x^{2n-2}-1)\\
&=
\gcd(x^2+x-1,x^{2n-2}-1)
=
1,
\end{align*}
for $n>1$, a contradiction. Therefore, no eigenvalue of $F$ is a root of unity, and the result follows.
\end{proof}
We now pass from the fundamental group to the manifold itself, using the relationship between presentability by products and domination by products established in \cite{KL}.
\begin{lemma}\label{lem:Xnotdomprod}
    The mapping torus $X_n$ is not dominated by a nontrivial product.
\end{lemma}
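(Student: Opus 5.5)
The proof will go through the Kotschick--Löh criterion from \cite{KL}: a rationally essential closed manifold whose fundamental group is not presentable by a product is not dominated by a nontrivial product. Lemma~\ref{lem:notpresprod} already supplies the group-theoretic input, so what remains is to check that $X_n$ meets the hypotheses of that criterion and to verify the reduction.

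The first step is rational essentiality. Since $X_n$ is aspherical, it is a model of $B\pi_1(X_n)$, and the classifying map $c\colon X_n\to B\pi_1(X_n)$ can be taken to be the identity. Then $H_{2n+1}(c)[X_n]=[X_n]\neq0$ in rational homology, so $X_n$ is rationally essential.

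The second step runs the Kotschick--Löh argument directly. Suppose $f\colon P_1\times P_2\to X_n$ has nonzero degree, with $\dim P_1,\dim P_2>0$. After passing to the finite covering of $X_n$ corresponding to the finite-index subgroup $\operatorname{im}(\pi_1 f)$ (nonzero degree forces finite index), we may assume $\pi_1 f$ is surjective. Write $\Gamma_i=f_*(\pi_1 P_i)$. These commute and together generate $\pi_1$, so $\pi_1 P_1\times\pi_1 P_2\to \pi_1$ is onto. The cover is again an aspherical torus bundle over $S^1$ whose monodromy is a power of $F$; by the proof of Lemma~\ref{lem:notpresprod}, no power of $F$ has $1$ as an eigenvalue, so its group is also not presentable by a product. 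Hence some $\Gamma_i$, say $\Gamma_1$, is finite. But $\pi_1(X_n)$ is torsion-free, being a poly-$\Z$ group, so $\Gamma_1$ is trivial.

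The third step derives the contradiction. By asphericity, $f$ is homotopic to a map factoring through $B\pi_1$ of the image, and the induced map $B(\pi_1P_1\times\pi_1P_2)\to B\pi_1$ factors through $B(\pi_1 P_2)$ because $\Gamma_1=1$. Consequently $f_*[P_1\times P_2]$ is the image of $c_{P_1}[P_1]\times c_{P_2}[P_2]$ under a map that kills everything of positive degree in the first factor. Since $\dim P_1>0$, this image is zero, contradicting $\deg f\neq0$.

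The main obstacle is precisely this factorization step, together with handling the finite cover. Rather than reprove it, I would cite \cite[Theorem 1.4]{KL} (rationally essential $+$ not presentable by a product $\Rightarrow$ not dominated by products), combined with the asphericity of $X_n$ and Lemma~\ref{lem:notpresprod}.
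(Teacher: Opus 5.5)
Your proposal is correct and matches the paper's proof: asphericity of $X_n$ gives rational essentiality, and together with Lemma~\ref{lem:notpresprod} the conclusion is exactly \cite[Theorem 1.4]{KL}. Your direct sketch in the second and third steps is also sound but not needed. It could be shortened in two ways: the Kotschick--L\"oh definition of presentability already allows finite-index images, so the finite cover is unnecessary, and once $\Gamma_1$ is trivial, asphericity directly gives $f\simeq f|_{\{p_1\}\times P_2}\circ\mathrm{pr}_2$, which has degree $0$.
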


\begin{proof}
Since $X_n$ is aspherical, it is rationally essential, that is,
$$
H_{2n+1}(c)([X_n])\neq 0
\in
H_{2n+1}(B\pi_1(X_n),\Q),
$$
where
$
c\colon X_n\rightarrow B\pi_1(X_n)
$
classifies the universal cover of $X_n$.
Together with Lemma \ref{lem:notpresprod}, the result follows from \cite[Theorem 1.4]{KL}.
\end{proof}

\subsection{Stability under multiplication by $\Z_{\geq0}$}

The manifolds $X_n$ constructed in Theorem~\ref{thm:realizando_z+} satisfy the hypotheses of Theorem~\ref{thm:product}. As a consequence, multiplication by the mapping degree set
$
D(X_n^{\#s_n},X_n)=\Z_{\geq0}
$
preserves mapping degree sets.

\begin{proposition}\label{cor:dZ>=0}
   Let $M$ and $N$ be closed oriented connected manifolds of the same dimension, and let $n>1$ be such that 
$
2n+1>\dim(M).
$
Then
$$
D(M\times X_n^{\# s_n},N\times X_n)
=
D(M,N)\cdot\Z_{\geq 0}.
$$

\end{proposition}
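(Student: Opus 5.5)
The plan is to apply Theorem~\ref{thm:product} with $k=1$, taking $M_1=M$, $M_2=N$, $N_1=X_n$ and the source manifold equal to $X_n^{\# s_n}$. Both have dimension $2n+1$, so the dimension hypothesis $\dim(N)=\sum n_i$ holds. Once the two hypotheses are verified, Theorem~\ref{thm:product} gives
\[
D(M\times X_n^{\# s_n},N\times X_n)=D(M,N)\cdot D(X_n^{\# s_n},X_n),
\]
and Theorem~\ref{thm:realizando_z+} identifies the last factor with $\Z_{\geq0}$, which is the claim.

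Hypothesis \eqref{cond1} of Theorem~\ref{thm:product} asks that $X_n$ not be dominated by any nontrivial product. This is exactly Lemma~\ref{lem:Xnotdomprod}, so nothing new is needed there.

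Hypothesis \eqref{cond2} is where the assumption $2n+1>\dim(M)$ enters. I will use $p=2n+1=\dim X_n\geq 1$. Since $\dim M<2n+1$, we have $H_{2n+1}(M,\Q)=0$. On the other hand $X_n$ is a closed oriented manifold, so $H_{2n+1}(X_n,\Q)\cong\Q\neq 0$. Hence for every map $f\colon M\to X_n$, the homomorphism $H_{2n+1}(f)\colon H_{2n+1}(M,\Q)\to H_{2n+1}(X_n,\Q)$ has zero source and nonzero target, so it is not surjective. This holds uniformly in $f$.

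There is no real obstacle. The only point to check carefully is that Theorem~\ref{thm:product} applies without change when $M_1=M$ and $M_2=N$ are arbitrary closed oriented manifolds of the same dimension $m$ and $k=1$. Its proof uses only hypotheses \eqref{cond1} and \eqref{cond2} on the $N_i$, together with Thom's realization theorem, which holds for arbitrary $M_1$ and $N$. Connectedness of $X_n^{\# s_n}$ and $X_n$ is clear, so the statement follows directly.
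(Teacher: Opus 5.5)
Your proposal is correct and is essentially the paper's proof: apply Theorem~\ref{thm:product} with $k=1$ and $N_1=X_n$, use Lemma~\ref{lem:Xnotdomprod} for hypothesis~\eqref{cond1} and the bound $\dim(M)<2n+1$ for hypothesis~\eqref{cond2}, then conclude with Theorem~\ref{thm:realizando_z+}. Your explicit choice $p=2n+1$, with $H_{2n+1}(M,\Q)=0$ and $H_{2n+1}(X_n,\Q)\cong\Q$, just spells out what the paper calls ``automatically satisfied.''
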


\begin{proof}
Let
$
f\colon M\to X_n
$
be an arbitrary map. Since $\dim(X_n)=2n+1>\dim(M)$, hypothesis \eqref{cond2} of Theorem \ref{thm:product} is automatically satisfied. Moreover, by Lemma \ref{lem:Xnotdomprod}, $X_n$ is not dominated by any nontrivial product. Hence Theorem \ref{thm:product} gives
$$
D(M\times X_n^{\# s_n},N\times X_n)
=
D(M,N)\cdot D(X_n^{\# s_n},X_n).
$$
By Theorem \ref{thm:realizando_z+},
$$
D(X_n^{\# s_n},X_n)=\Z_{\geq 0}.
$$
Therefore,
$$
D(M\times X_n^{\# s_n},N\times X_n)
=
D(M,N)\cdot\Z_{\geq 0}.
$$
\end{proof}
Combining Lemma~\ref{lem:Ei} with Proposition~\ref{cor:dZ>=0}, we obtain the following realization of the additive submonoids $d\,\Z_{\geq0}$.
\begin{corollary}\label{cor:dZ}
 For every $d\in\Z$, the additive submonoid $d\Z_{\geq 0}$ is realizable as a mapping degree set. More precisely, for every $i\in\Z\smallsetminus\{0\}$ and every $n\geq 2$,
$$
D(E_i\times X_n^{\# s_n},E_{id}\times X_n)
=
\{0,d\}\cdot\Z_{\geq 0} = d\,\Z_{\geq 0}.
$$
\end{corollary}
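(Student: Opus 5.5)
The plan is to apply Proposition~\ref{cor:dZ>=0} with $M=E_i$ and $N=E_{id}$, and then compute the resulting set using Lemma~\ref{lem:Ei}.

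First, the dimension condition. The manifolds $E_i=E_{i,g}$ and $E_{id}=E_{id,g}$ are circle bundles over $\Sigma_g$ with the same $g$. Both are therefore closed oriented connected $3$-manifolds. For $n\geq 2$ we have $\dim X_n=2n+1\geq 5>3=\dim E_i$, so the hypothesis $2n+1>\dim(M)$ of Proposition~\ref{cor:dZ>=0} holds. (Inside that proposition, this inequality is what makes hypothesis~\eqref{cond2} of Theorem~\ref{thm:product} automatic: a map $E_i\to X_n$ kills $H_{2n+1}(X_n,\Q)\cong\Q$. Lemma~\ref{lem:Xnotdomprod} supplies hypothesis~\eqref{cond1}.) The proposition then gives
\[
D(E_i\times X_n^{\#s_n},E_{id}\times X_n)=D(E_i,E_{id})\cdot\Z_{\geq0}.
\]

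Next, compute $D(E_i,E_{id})$. If $d\neq0$, then $id\neq0$ and $i\mid id$, so Lemma~\ref{lem:Ei} gives $D(E_i,E_{id})=\{0,d\}$. If $d=0$, then $E_{id}=E_0$ is not covered by the lemma as stated, since it assumes nonzero Euler numbers. I would treat this case separately. The cleanest option is to note that $0\cdot\Z_{\geq0}=\{0\}$ is realized trivially, for instance by $D(E_1,E_2\times\cdots)$ or by any pair with $D=\{0\}$. Alternatively, one can argue directly that every map $E_i\to E_0=\Sigma_g\times S^1$ has degree $0$, for example via Theorem~\ref{thm:product} or because $E_0$ has $b_1=2g+1>2g=b_1(E_i)$, which rules out nonzero degree. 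The main point of the statement concerns $d\neq0$, so this degenerate case is the only genuine obstacle, and only a notational one.

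Finally, for $d\neq0$, observe that $\{0,d\}\cdot\Z_{\geq0}=\{0\}\cup\{dk:k\geq0\}=d\,\Z_{\geq0}$. This gives the displayed identity and shows that $d\,\Z_{\geq0}$ is a mapping degree set.
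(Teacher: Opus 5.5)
Your proposal is correct and is exactly the paper's argument: apply Proposition~\ref{cor:dZ>=0} with $M=E_i$, $N=E_{id}$ (using $2n+1\geq 5>3$), and compute $D(E_i,E_{id})=\{0,d\}$ via Lemma~\ref{lem:Ei}. Your separate treatment of $d=0$ fills a degenerate case that the paper passes over silently, since it only defines $E_i$ and states Lemma~\ref{lem:Ei} for nonzero Euler numbers: you read $E_0$ as $\Sigma_g\times S^1$ and use $b_1(E_0)=2g+1>2g=b_1(E_i)$ to get $D(E_i,E_0)=\{0\}$.
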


The submonoids $d\,\Z_{\geq0}$ constitute the basic building blocks for arbitrary additive submonoids of $\Z$. Using Proposition~\ref{prop:decomposition} together with Lemma~\ref{lem:sum}, we obtain the following result. 
\begin{corollary} \label{cor:additive}
Let $A\subset \Z$ be an additive submonoid.
Then, there exist closed
oriented connected manifolds $M$ and $N$, with $N$ aspherical,  such that
$D(M,N)=A$.
\end{corollary}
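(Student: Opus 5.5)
By Proposition~\ref{prop:decomposition}, write $A=\sum_{i=1}^m d_i\,\Z_{\ge0}$ with $d_1,\ldots,d_m$ nonzero integers. If $A=\{0\}$ (i.e.\ $m=0$), take any aspherical $N$ and a $M$ with $D(M,N)=\{0\}$; for instance $M=S^k$ and $N=T^k$ with $k\ge 2$ will do. So assume $m\ge1$. The plan is to realize each summand by the manifolds of Corollary~\ref{cor:dZ} with a \emph{common} target, and then add them via the connected-sum formula of Lemma~\ref{lem:sum}.

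\textbf{Step 1: common target.} Fix $n\ge2$, the genus $g>1$, and set $L:=\operatorname{lcm}$ of $|d_1|,\ldots,|d_m|$, with target
$$N:=E_{L}\times X_n.$$
For each $i$, put $e_i:=L/d_i\in\Z\smallsetminus\{0\}$, so that $e_i d_i=L$. Corollary~\ref{cor:dZ}, applied with the pair $(e_i,d_i)$, gives
$$D(M_i,N)=d_i\,\Z_{\ge0},\qquad M_i:=E_{e_i}\times X_n^{\#s_n}.$$
All $M_i$ have dimension $3+2n+1=\dim N$. The target $N$ is aspherical, being a product of the aspherical manifolds $E_L$ and $X_n$.

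\textbf{Step 2: sum.} Take $M:=\#_{i=1}^m M_i$. Since $N$ is aspherical of dimension $2n+4\ge 8$, we have $\pi_{\dim N-1}(N)=0$. The equality case of Lemma~\ref{lem:sum} then yields
$$D(M,N)=\sum_{i=1}^m D(M_i,N)=\sum_{i=1}^m d_i\,\Z_{\ge0}=A.$$

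\textbf{Main obstacle.} The point needing care is that Corollary~\ref{cor:dZ} must be usable with an \emph{arbitrary} index $i$ for $E_i$, so that a single target $E_{id}$ serves all summands simultaneously. It is: the corollary is stated for every $i\in\Z\smallsetminus\{0\}$, and its proof uses Lemma~\ref{lem:Ei} with an arbitrary $i$ together with Proposition~\ref{cor:dZ>=0}. The latter applies because $2n+1>3=\dim E_{e_i}$ when $n\ge2$. The sign of $d_i$ causes no trouble, since $E_j$ is defined for negative Euler number $j$ as well.
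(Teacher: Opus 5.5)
Your proposal is correct and follows essentially the same route as the paper. You decompose $A$ via Proposition~\ref{prop:decomposition}, realize each $d_l\,\Z_{\ge0}$ with a common aspherical target $E_j\times X_n$ via Corollary~\ref{cor:dZ}, and add the pieces using the equality case of Lemma~\ref{lem:sum}. The differences are cosmetic: you take $j=\operatorname{lcm}$ of the $|d_l|$ where the paper takes $j=\prod d_m$, and you treat the trivial monoid $A=\{0\}$ explicitly via $D(S^k,T^k)=\{0\}$, a case the paper leaves implicit.
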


\begin{proof} 
    By Proposition \ref{prop:decomposition}, there exist (nonzero) $d_1,\ldots,d_k\in \Z$ such that 
    $A=\sum\limits_{l=1}^kd_l\Z_{\ge0}$. Let $j=\prod\limits_{m=1}^k d_m$  
    and $i_l =\prod\limits_{m\neq l} d_m$, 
    and define $N=E_j\times X$ and $M_l =E_{i_l}\times X^{\#s}$, for $l=1,\ldots,k$. Since $N$ is aspherical, by Proposition \ref{cor:dZ>=0} and Lemma \ref{lem:sum} we have 
$$
D(\#_{l=1}^k M_l,N)=\sum_{l=1}^k D( M_l,N)=\sum_{l=1}^kd_l\,\Z_{\ge0}=A\, .
$$
\end{proof}
In the notation of the Introduction, Corollary~\ref{cor:additive} shows that every element of $\mathcal C_0$ is realizable. This constitutes the first step towards Theorem~\ref{thm:main}, which establishes the analogous statement for every element of $\mathcal C$.

\section{Multiplicative Stability and Proof of Theorem~\ref{thm:main}}\label{sec:plus}

In this section we prove Theorem~\ref{thm:main}. The main point is not merely that every set in $\mathcal F$, the class of finite sets of $\Z$ containing $0$,  and every set in $\mathcal C$, the class of sets obtained from additive submonoids of $\Z$ by finitely many sums and products,  occurs as a mapping degree set. Rather, we establish a stronger multiplicative stability property: multiplying an arbitrary mapping degree set by any set belonging to either class again yields a mapping degree set.

\medskip 

A key ingredient in the proof is the following:
\begin{lemma}\label{lem:Einotdomprod}\cite[Theorem 1]{Wang}\cite[Lemma 1]{KN}
The manifold $E_i$ is not dominated by any nontrivial product for all $i\neq 0$.
\end{lemma}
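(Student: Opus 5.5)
The plan is to reproduce the classical argument (Wang, Kotschick--Neofytidis) through presentability by products, exactly as was done for $X_n$ in Lemmas~\ref{lem:notpresprod} and \ref{lem:Xnotdomprod}. First I record that $E_i$ is rationally essential. It is aspherical, so the classifying map $c\colon E_i\to B\pi_1(E_i)$ is a homotopy equivalence and sends $[E_i]$ to a nonzero class in $H_3(B\pi_1(E_i),\Q)$. By \cite[Theorem 1.4]{KL}, it then suffices to show that $\pi_1(E_i)$ is not presentable by a product.

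For the group-theoretic step, I use the central extension \eqref{eqn:kerpj}, $\Z=\langle\ell\rangle\to\pi_1(E_i)\to\pi_1(\Sigma_g)$, with $i\neq 0$. Take $\varphi\colon H_1\times H_2\to\pi_1(E_i)$ onto a subgroup of finite index. Compose with the projection to $\pi_1(\Sigma_g)$. The images of $H_1$ and $H_2$ commute elementwise and generate a finite-index subgroup of a hyperbolic surface group. Centralizers of nontrivial elements in $\pi_1(\Sigma_g)$ are infinite cyclic, and a finite-index subgroup of $\pi_1(\Sigma_g)$ is a non-elementary surface group, not virtually $\Z$. It follows that one image, say that of $H_1$, is trivial in $\pi_1(\Sigma_g)$. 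Hence $\varphi(H_1)\subset\langle\ell\rangle$ is central.

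If $\varphi(H_1)$ is finite we are done. Otherwise $\varphi(H_1)=\langle\ell^k\rangle$ with $k\neq 0$. In that case $\varphi(H_2)$ maps onto a finite-index subgroup $\Gamma\subset\pi_1(\Sigma_g)$, and the finite-index subgroup $\varphi(H_1)\varphi(H_2)$ of $\pi_1(E_i)$ is a central product, virtually $\Z\times\varphi(H_2)$.

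The main obstacle is to rule out this last case, and here the nonzero Euler number $i$ is essential. I would pass to the finite cover $\widetilde E\to E_i$ with this fundamental group. It is again a circle bundle, over the surface covering $\Sigma_g$ with group $\Gamma$. Its rational Euler class is a nonzero multiple of $i$, because Euler numbers multiply by the degree of the cover, up to the fiber-covering index. Now $\varphi(H_2)$ maps onto $\Gamma$ with kernel inside $\langle\ell\rangle\cap\varphi(H_2)$, which must be finite: otherwise $\varphi(H_1)\cap\varphi(H_2)$ would be infinite and we could reduce to the product situation anyway. So, after passing to finite index, the extension $\langle\ell\rangle\to\pi_1(\widetilde E)\to\Gamma$ virtually splits. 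A virtual splitting forces the rational Euler class of the extension to vanish in $H^2(\Gamma,\Q)$, contradicting $i\neq 0$.

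Concretely, I would check the cohomological statement as follows. A section of the extension over a finite-index subgroup makes the pullback of the Euler class zero. Restriction to a finite-index subgroup is injective on rational cohomology, since transfer composed with restriction is multiplication by the index. Hence the Euler class itself would vanish rationally, contradicting $i\neq 0$. This shows $\pi_1(E_i)$ is not presentable by a product, and \cite[Theorem 1.4]{KL} concludes that $E_i$ is not dominated by any nontrivial product.
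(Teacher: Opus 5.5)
\textbf{Gap: the group-theoretic claim you reduce to is false.} The group $\pi_1(E_i)$ \emph{is} presentable by a product. The fiber class $\ell$ is central and of infinite order, so
$\varphi\colon\langle\ell\rangle\times\pi_1(E_i)\to\pi_1(E_i)$, $\varphi(\ell^n,x)=\ell^n x$,
is a surjective homomorphism. Both $\varphi(H_1)=\langle\ell\rangle$ and $\varphi(H_2)=\pi_1(E_i)$ are infinite. More generally, any group with infinite center is presentable by a product. So \cite[Theorem 1.4]{KL} cannot be applied to $E_i$, and that is why the paper cites \cite{Wang} and \cite{KN} rather than arguing as for $X_n$.

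The step that breaks is ``$\langle\ell\rangle\cap\varphi(H_2)$ must be finite: otherwise \dots\ we could reduce to the product situation anyway''. In the example above $\varphi(H_2)\supset\langle\ell\rangle$, and there is no reduction that yields a contradiction. Your Euler-class argument (a virtual section kills the rational Euler class, and restriction to finite index is injective) is correct. However, it only handles the subcase in which $\varphi(H_2)$ meets the center trivially, and the other subcase genuinely occurs.

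\textbf{What is needed instead.} The hypothesis $i\neq 0$ has to be used at the level of maps and degrees, not groups.
In dimension $3$ a nontrivial product is $S^1\times F$ with $F$ a closed surface. For $f\colon S^1\times F\to E_i$ of nonzero degree, your centralizer argument correctly gives $f_*[S^1]=\ell^k$, since $f_*\pi_1$ has finite index.
\begin{itemize}
\item If $k=0$, asphericity lets $f$ factor up to homotopy through the projection to $F$, so $\deg f=0$.
\item If $k\neq0$, asphericity gives $f\simeq\mu\circ(\psi_k\times g)$. Here $g=f|_{\{*\}\times F}$, $\psi_k(z)=z^k$, and $\mu$ is the principal circle action.
\end{itemize}
In the second case, $\mu\circ(\mathrm{id}\times g)$ is a bundle map covering $h=p\circ g\colon F\to\Sigma_g$. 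Its domain $S^1\times F$ is the pullback bundle $h^*E_i$, trivialized by the section coming from $g$. Hence $\deg f=\pm k\deg h$.
On the other hand, $h^*E_i$ has a section, so $0=e(h^*E_i)=i\deg(h)$, and $i\neq0$ forces $\deg h=0$. Equivalently, $p^*[\Sigma_g]^*=0$ in $H^2(E_i;\Q)$ by the Gysin sequence, and $h$ factors through $p$.
Some such homological step is indispensable: for $i=0$ the same construction produces maps $S^1\times\Sigma_g\to E_0$ of every degree.
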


The proof of Theorem~\ref{thm:main} is divided into two parts. We first treat finite sets, corresponding to the class $\mathcal F$, and then consider sets belonging to the class $\mathcal C$.

\if{}

Before that, we recall some properties of the manifolds $E_{i,g}$, where $i\neq 0$. The fundamental group of $E_{i,g}$ is an extension 
 \begin{equation}\label{eqn:kerpj}
  \pi_1(S^1)=\Z \to \pi_1(E_{i,g}) \to H=\pi_1(\Sigma_g).
      \end{equation}
We write $\ell$ for the generator of $\pi_1(S^1)$
and $a_1,b_1,\ldots, a_g,b_g$ for the generators of
$\pi_1(\Sigma_g)$. Then $\pi_1(E_{i,g})$ is
generated by $\ell,a_1,b_1,\ldots, a_g,b_g$ and satisfies the relation
 $$
 \prod_{k=1}^g [a_k,b_k] =\ell^i \, .
 $$
The first integral homology group of $E_{i,g}$ is
$$
H_1(E_{i,g},\Z)
=
\pi_1(E_{i,g})_{ab}
=
\Z^{2g}\times(\Z/i\,\Z),
$$
where the free part is generated by
$a_1,b_1,\ldots,a_g,b_g$ and the torsion factor by $\ell$.
Hence
$$
H_1(E_{i,g},\Q)\cong\Q^{2g}.
$$

\fi

\begin{theorem}\label{thm:proofmainf}
Let \(A \) be a set in  $ \mathcal F$, and let $M_1$ and $M_2$ be a pair of closed oriented connected manifolds of the same dimension. Then there exist closed oriented connected manifolds $N_1,N_2$ such that:
\begin{enumerate}
\item $D(N_1,N_2)=A.
$
\item 
$D(M_1\times N_1,M_2\times N_2)
=
D(M_1,M_2)\cdot A.$
\end{enumerate}

\end{theorem}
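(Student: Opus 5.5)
Write $A=\{0,d_1,\ldots,d_k\}$ with the $d_l$ nonzero; the case $A=\{0\}$ is handled separately at the end. The plan is to realize $A$ as a mapping degree set with target of the form $E_j$ and source a connected sum of manifolds $E_{i_l}$ (plus a degree-zero piece). We then upgrade this to the product statement via Theorem~\ref{thm:product}, using Lemma~\ref{lem:Einotdomprod}.

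\textbf{Realizing $A$.} Set $j=\prod_{m=1}^k d_m$ and $i_l=j/d_l$. By Lemma~\ref{lem:Ei}, $D(E_{i_l,g},E_{j,g})=\{0,d_l\}$. Since $E_j$ is aspherical of dimension $3$, we have $\pi_2(E_j)=0$, so Lemma~\ref{lem:sum} gives
\[
D(\#_l E_{i_l},E_j)=\sum_l\{0,d_l\}.
\]
This sumset is generally strictly larger than $A$, so this naive choice fails. This is the main obstacle: we must realize an arbitrary finite set, not a sumset.

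To get around it, I would invoke the result of \cite{CMuV2} that every finite set containing $0$ is realizable, but this is not enough for part (2), which needs the target to satisfy the hypotheses of Theorem~\ref{thm:product}. The proposed fix is to take $N_2=\prod_{l} E_{j_l}$, a product of several circle bundles with pairwise incompatible Euler numbers, and $N_1$ a connected sum $\#_l P_l$ of products $P_l=\prod_{t} E_{i_{l,t}}$. The Euler numbers are chosen so that $D(P_l,N_2)=\{0,d_l\}$: in exactly one coordinate each factor is forced to be $d_l$, and the others have degree $\pm1$ or $0$ only. Choosing distinct primes as Euler numbers should make the cross terms vanish by Lemma~\ref{lem:Ei}, since $i\nmid j$ gives $\{0\}$.

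For connected sums into a product target, $\pi_{m-1}$ of an aspherical product vanishes, so Lemma~\ref{lem:sum} again gives a sum. We then need a degree of a map from a connected sum to split so that only one summand contributes. I would force this by making each $P_l$ map nontrivially only to a dedicated pattern, for instance by exploiting the fact that each map $P_l\to N_2$ of nonzero degree is a product-type map (Theorem~\ref{thm:product} applied iteratively), and then arranging the Euler numbers so that $D(P_l,N_2)+D(P_{l'},N_2)$ can only have one nonzero term. If this exact combinatorics resists, my fallback is to take $N_1,N_2$ to be the manifolds of \cite{CMuV2}. I would check from their construction that $N_2$ is a product of manifolds that are not dominated by products, for example circle bundles or similar aspherical pieces.

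\textbf{Product statement.} Given the realization, choose the genus $g$ of each $\Sigma_g$ so that $2g>\dim H_1(M_1,\Q)$. Then for any $f\colon M_1\to E_{j,g}$, the map $H_1(f)$ cannot surject onto $H_1(E_{j,g},\Q)\cong\Q^{2g}$, which is hypothesis~\eqref{cond2} of Theorem~\ref{thm:product}. Hypothesis~\eqref{cond1} is Lemma~\ref{lem:Einotdomprod}. Theorem~\ref{thm:product} then yields
\[
D(M_1\times N_1,M_2\times N_2)=D(M_1,M_2)\cdot A.
\]
Finally, for $A=\{0\}$, take $N_1=E_{2}$ and $N_2=E_{3}$, so that $D(N_1,N_2)=\{0\}$ by Lemma~\ref{lem:Ei}, and apply the same argument.
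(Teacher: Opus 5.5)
There is a genuine gap. Your main construction cannot work, and the obstruction is structural rather than combinatorial. The first inclusion in Lemma~\ref{lem:sum}, $\sum_l D(P_l,N_2)\subset D(\#_l P_l,N_2)$, holds for \emph{every} target, because you may choose a map on each summand independently and glue. So as soon as $d_l\in D(P_l,N_2)$ and $d_{l'}\in D(P_{l'},N_2)$, the degree $d_l+d_{l'}$ is realized on $\#_l P_l$. No choice of Euler numbers or ``dedicated patterns'' can force only one summand to contribute. With $N_2=\prod_t E_{j_t}$ aspherical you even get equality, $D(\#_l P_l,N_2)=\sum_l\{0,d_l\}$, which is exactly the sumset you wanted to avoid; for instance $A=\{0,1,2\}$ would also produce $3$. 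A connected sum on the source side always yields the full sumset. Consistent with this, the realization from \cite[Theorem A]{CMuV2} used in the paper puts the connected sum on the target side: $N_2=E_{i_1,g}\#\cdots\#E_{i_k,g}$.

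Your fallback, taking $N_1,N_2$ from \cite{CMuV2}, is the paper's route, but the check you propose would fail and the decisive step is missing. $N_2$ is a connected sum of circle bundles, not a product, so you cannot apply Theorem~\ref{thm:product} factor by factor to copies of $E_{j,g}$. Instead you apply it with $k=1$ and $N_2$ itself as the single factor. This requires proving that $N_2$ is not dominated by a nontrivial product. The argument is this: the pinch map $N_2\to E_{i_1,g}$ has degree $1$ \cite[Lemma 4.2]{NWW}, so a nonzero-degree map from a nontrivial product to $N_2$ would compose to one onto $E_{i_1,g}$, contradicting Lemma~\ref{lem:Einotdomprod}. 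Hypothesis~\eqref{cond2} then follows from $\dim H_1(N_2;\Q)\ge 2g$. This uses the fact that \cite{CMuV2} provides the construction for every $g>1$, so you can take $g$ large compared with $\dim H_1(M_1;\Q)$. Your genus-choice idea and your treatment of $A=\{0\}$ via $E_{2,g},E_{3,g}$ are fine, but they do not repair part (1) of the main construction.
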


\begin{proof}
By \cite[Theorem A]{CMuV2}, for every $g>1$ there exist manifolds $N_1$ and
$
N_2=E_{i_1,g}\#\cdots\#E_{i_k,g},
$
with $i_1,\ldots,i_k\neq 0$, such that
$
D(N_1,N_2)=A.
$

We claim that $N_2$ is not dominated by any nontrivial product. Indeed, if there were a map of nonzero degree from a nontrivial product to $N_2$, then composing with the pinch map
$
N_2\longrightarrow E_{i_1,g},
$
which has degree $1$ by \cite[Lemma 4.2]{NWW}, would yield a map of nonzero degree from a nontrivial product to $E_{i_1,g}$. This contradicts Lemma \ref{lem:Einotdomprod}.

Moreover,
$
\dim H_1(N_2,\Q)\geq 2g.
$
As $g$ can be taken arbitrarily large, for $g > \dim H_1(M_1,\Q)$ hypotheses of Theorem \ref{thm:product} are satisfied and the result follows.
\end{proof}

\medskip 
Finally, we are in a position to prove Theorem \ref{thm:main} for the class $\mathcal C$. 

\if{} 
\begin{theorem}\label{thm:proofmain}
 Let \(A \) be a set in  $ \mathcal C =\bigcup_{n\geq 0}\mathcal C_n$, and let $M_1$ and $M_2$ be a pair of closed oriented connected manifolds of the same dimension. Then there exist closed oriented connected manifolds $N_1,N_2$ such that:

\begin{enumerate}
\item
$D(N_1,N_2)=A;$
\item
\(N_2\) is a finite product of manifolds of the form \(E_{j,g}\) or \(X_n\);
\item
$
D(M_1\times N_1,M_2\times N_2)
=
D(M_1,M_2)\cdot A.
$
\end{enumerate}
\end{theorem}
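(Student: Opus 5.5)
Argue by induction on $n$ with $A\in\mathcal C_n$, proving the strengthened statement in which $N_2$ is a finite product of manifolds $E_{j,g}$ and $X_m$. The induction hypothesis is kept flexible: for a fixed $A$ the genera $g$ and the indices $m$ can be taken as large as we like, so that the hypotheses of Theorem~\ref{thm:product} hold against any prescribed $M_1$.

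\emph{Base case} $A\in\mathcal C_0$. By Proposition~\ref{prop:decomposition}, $A=\sum_{l=1}^k d_l\Z_{\ge0}$. Take $N_2=E_{j,g}\times X_m$ and $N_1=\#_l (E_{i_l,g}\times X_m^{\#s_m})$ as in Corollary~\ref{cor:additive}, which gives (1). For (3), apply Theorem~\ref{thm:product} with factors $E_{j,g}$ and $X_m$. Both are not dominated by products, by Lemmas~\ref{lem:Einotdomprod} and \ref{lem:Xnotdomprod}. Condition (2) holds if $2m+1>\dim M_1$, and for $E_{j,g}$ if $2g>\dim H_1(M_1,\Q)$, since $H_1(E_{j,g},\Q)\cong\Q^{2g}$. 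This yields $D(M_1\times N_1,M_2\times N_2)=D(M_1,M_2)\cdot D(N_1,N_2)=D(M_1,M_2)\cdot A$.

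\emph{Product step.} Let $A=B\cdot C$ with $B,C$ of lower level. First realize $C$ by $(N_1',N_2')$ with $N_2'$ a product of $E$'s and $X$'s, chosen large relative to $M_1$. Then apply the inductive statement for $B$ with the pair $(M_1\times N_1',\,M_2\times N_2')$ in place of $(M_1,M_2)$, obtaining $(N_1'',N_2'')$ with
\[
D(M_1\times N_1'\times N_1'',M_2\times N_2'\times N_2'')=D(M_1\times N_1',M_2\times N_2')\cdot B=D(M_1,M_2)\cdot C\cdot B .
\]
Set $N_1=N_1'\times N_1''$ and $N_2=N_2'\times N_2''$. Taking $M_1=M_2=\mathrm{pt}$ (or $S^k$, where $D=\Z$ contains $1$) gives (1). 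Strictly, the inductive statement for $B$ is applied to a new $M_1$. This is allowed because the genera and indices can be enlarged after $M_1$ is fixed.

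\emph{Sum step, the main obstacle.} Let $A=B+C$. The natural move is Lemma~\ref{lem:sum}: realize $B$ and $C$ with the \emph{same} target $N_2$ and take $N_1=N_1^B\#N_1^C$. Equality in Lemma~\ref{lem:sum} needs $\pi_{\dim-1}(N_2)=0$, which holds since $N_2$ is aspherical of dimension $\ge 2$. The difficulty is forcing a common target. I would handle this by padding. Multiply the $B$-realization by $(N_2^C,N_2^C)$, or rather by a pair realizing $\{0,1\}$ or $\Z_{\ge0}$-free data, so that both targets become $N_2^B\times N_2^C$. Concretely, Theorem~\ref{thm:product} shows $D(N_1^B\times N_2^C,\,N_2^B\times N_2^C)=B\cdot D(N_2^C,N_2^C)$. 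This is only correct if $D(N_2^C,N_2^C)$ is harmless, and for $X_m$ it is not $\{0,1\}$. So instead I would arrange in the induction that all targets are built from a common stock: products of $E_{j,g}$ (with $j$ a common multiple, using $D(E_i,E_{id})=\{0,d\}$ to adjust) and the same $X_m$ factors. The padding source for an $E_{j,g}$ factor is then $E_{j,g}$ itself, with $D=\{0,1\}$. For an $X_m$ factor, padding should instead be done by adding an extra factor $X_m$ with source $X_m^{\#s_m}$ to both sides, which multiplies by $\Z_{\ge0}$. Since $B+C$ is unchanged by this when $0\in B,C$? It is not in general, so this is the step needing most care. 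The first thing I would try is the Euler-number trick: rescale the $E$ indices so the targets coincide, and carry equal numbers of $X_m$ factors from the start.
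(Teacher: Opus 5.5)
Your base case is correct, and your product step can be made to work. However, (1) does not follow by ``taking $M_1=M_2=\mathrm{pt}$'', because the pair you built depends on $M_1$; with $S^k$ you would only learn about $D(S^k\times N_1,S^k\times N_2)=\Z\cdot A$. Instead, choose the parameters of $N_2''$ large relative to $N_1'$ as well, and apply Theorem~\ref{thm:product} directly, as the paper does.

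The genuine gap is the sum step, which you leave open. It stays open because of a false belief: you reject padding by a source realizing $\Z_{\ge0}$ on the grounds that multiplying by $\Z_{\ge0}$ does not preserve the set \emph{in general}. For sets in $\mathcal C$ it does. An additive submonoid $A$ satisfies $A\cdot\Z_{\ge0}=A$, since $na=a+\cdots+a$ and $1\in\Z_{\ge0}$. This property is inherited by sums and products, because $n(a+b)=na+nb$ and $n(ab)=(na)b$. Hence $A\cdot\Z_{\ge0}=A$ for every $A\in\mathcal C$.

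With this observation the sum step closes exactly as in the paper. The induction carries, for the same target $N_2$, a second source $N_1'$ with $D(N_1',N_2)=\Z_{\ge0}$. In the base case $N_1'=E_{j,g}\times X_n^{\#s_n}$ (Corollary~\ref{cor:dZ} with $d=1$), which is precisely the padding source you wrote down. In either inductive step one takes $N_1'=N_{1,1}'\times N_{2,1}'$, using $\Z_{\ge0}\cdot\Z_{\ge0}=\Z_{\ge0}$. Choose the parameters of $N_{2,2}$ large relative to both $N_{1,1}$ and $N_{1,1}'$. Then Theorem~\ref{thm:product} gives $D(N_{k,1}\times N'_{3-k,1},\,N_{1,2}\times N_{2,2})=A_k\cdot\Z_{\ge0}=A_k$ for $k=1,2$; all manifolds here are even-dimensional, so reordering factors introduces no sign. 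Both pieces now share the aspherical target $N_{1,2}\times N_{2,2}$, and Lemma~\ref{lem:sum} yields
\[
D\bigl((N_{1,1}\times N'_{2,1})\#(N_{2,1}\times N'_{1,1}),\,N_{1,2}\times N_{2,2}\bigr)=A_1+A_2 .
\]

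Your fallback of forcing the two targets to coincide by rescaling Euler numbers is not available from the inductive hypothesis. The targets for $B$ and $C$ come from unrelated construction trees, with different numbers and types of factors. Padding by $(N_2^C,N_2^C)$ would require control of $D(N_2^C,N_2^C)$, which a dedicated $\Z_{\ge0}$-source avoids.
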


\begin{proof}
Since $A \in \mathcal C =\bigcup_{n\geq 0}\mathcal C_n$, then  $A \in \mathcal C_n$ for some $n$. We prove the theorem by induction on \(n\). Assume first that \(A\in\mathcal C_0\). Then \(A\) is an additive submonoid of \(\mathbb Z\). By Corollary \ref{cor:additive}, there exist closed oriented connected manifolds \(N_1\) and \(N_2\) such that
\[
D(N_1,N_2)=A.
\]
Moreover, by the construction in the proof of
Corollary~\ref{cor:additive}, \(N_2\) is of the form
\(E_{j,g}\times X_n\).
Let now \(M_1\) and \(M_2\) be closed oriented connected manifolds of the same dimension. Since the parameters \(g\) and \(n\) occurring in the construction of \(N_2\) can be chosen arbitrarily large, we may choose them sufficiently large with respect to \(M_1\) so that the hypotheses of Theorem~\ref{thm:product} are satisfied. Hence
\[
D(M_1\times N_1,M_2\times N_2)
=
D(M_1,M_2)\cdot D(N_1,N_2)
=
D(M_1,M_2)\cdot A.
\]
Thus the assertion holds for every \(A\in\mathcal C_0\).

Assume now that the assertion holds for every set in \(\mathcal C_n\), and let
\(A\in\mathcal C_{n+1}\).
By definition of \(\mathcal C_{n+1}\), either
$
A=A_1+A_2
$
or
$
A=A_1\cdot A_2,
$
where  \(A_1, A_2\in\mathcal C_n\).
We first treat the product case, so let
\[
A=A_1\cdot A_2,
\]
where \(A_1,A_2\in\mathcal C_n\).
By the induction hypothesis, there exist closed oriented connected manifolds
\[
(N_{1,1},N_{1,2})
\qquad\text{and}\qquad
(N_{2,1},N_{2,2})
\]
such that
\[
D(N_{1,1},N_{1,2})=A_1,
\qquad
D(N_{2,1},N_{2,2})=A_2,
\]
and each of \(N_{1,2}\) and \(N_{2,2}\) is a finite product of manifolds of the form $E_{j,g}$ or $X_n$.

 When constructing the second pair, we choose all parameters occurring in \(N_{2,2}\) sufficiently large with respect to \(N_{1,1}\). Write
\[
N_{2,2}=\prod_r Y_r,
\]
where each \(Y_r\) is of the form $E_{j,g}$ or $X_n$.
By choosing the parameters sufficiently large, we may assume that for every factor \(Y_r\), either
\[
\dim(Y_r)>\dim(N_{1,1}),
\]
or there exists \(p\geq 1\) such that
\[
\dim H_p(Y_r;\Q)>\dim H_p(N_{1,1};\Q).
\]
Hence, hypothesis~\eqref{cond2} of Theorem~\ref{thm:product} is satisfied. Moreover, each factor \(Y_r\) is not dominated by any non-trivial product, by Lemmas~\ref{lem:Einotdomprod} and \ref{lem:Xnotdomprod}. Therefore hypothesis~\eqref{cond1} of Theorem~\ref{thm:product} is also satisfied.
Thus Theorem~\ref{thm:product} applies:
\[
D(N_{1,1}\times N_{2,1},
  N_{1,2}\times N_{2,2})
=
D(N_{1,1},N_{1,2})
\cdot D(N_{2,1},N_{2,2})
=
A_1\cdot A_2
=
A.
\]
Moreover, \(N_{1,2}\times N_{2,2}\) is again a finite product of manifolds of the form \(E_{j,g}\times X_n\).

Let now \(M_1\) and \(M_2\) be closed oriented connected manifolds of the same dimension. Since the parameters occurring in the construction of \(N_{1,2}\) and \(N_{2,2}\) can be chosen arbitrarily large, we may choose them successively sufficiently large with respect to \(M_1\) so that the hypotheses of Theorem~\ref{thm:product} are satisfied for the pair
\[
\bigl(M_1,\,
N_{1,1}\times N_{2,1}\bigr).
\]
Applying Theorem~\ref{thm:product} once more, we obtain
\[
D\bigl(M_1\times N_{1,1}\times N_{2,1},
       M_2\times N_{1,2}\times N_{2,2}\bigr)
=
D(M_1,M_2)\cdot
D(N_{1,1}\times N_{2,1},
  N_{1,2}\times N_{2,2}).
\]
Since
\[
D(N_{1,1}\times N_{2,1},
  N_{1,2}\times N_{2,2})
=A,
\]
it follows that
\[
D\bigl(M_1\times N_{1,1}\times N_{2,1},
       M_2\times N_{1,2}\times N_{2,2}\bigr)
=
D(M_1,M_2)\cdot A.
\]

We  now turn to the sum case. Let
$
A=A_1+A_2,
$
where \(A_1,A_2\in\mathcal C_n\).
By the induction hypothesis, there exist closed oriented connected manifolds
\[
(N_{1,1},N_{1,2})
\qquad\text{and}\qquad
(N_{2,1},N_{2,2})
\]
such that
\[
D(N_{1,1},N_{1,2})=A_1,
\qquad
D(N_{2,1},N_{2,2})=A_2,
\]
and each \(N_{k,2}\) is a finite product of manifolds of the form
\(E_{j,g}\times X_n\).

For \(k\in\{1,2\}\), write
\[
N_{k,2}
=
\prod_{l=1}^{m_k}
(E_{j,g_{k,l}}\times X_{n_{k,l}}).
\]
Define
\[
N'_{k,1}
=
\prod_{l=1}^{m_k}
(E_{j,g_{k,l}}\times X_{n_{k,l}}^{\# s_{n_{k,l}}}).
\]
By construction,
\[
D(N'_{k,1},N_{k,2})=\mathbb Z_{\ge0}.
\]

Choosing the parameters in the second pair sufficiently large with respect to
\(N_{1,1}\), the hypotheses of Theorem~\ref{thm:product} are satisfied. Hence, for \(k\in\{1,2\}\),
\[
D(N_{k,1}\times N'_{3-k,1},
  N_{1,2}\times N_{2,2})
=
D(N_{k,1},N_{k,2})\cdot
D(N'_{3-k,1},N_{3-k,2})
=
A_k\cdot\mathbb Z_{\ge0}
=
A_k.
\]

Since \(N_{1,2}\times N_{2,2}\) is aspherical, Lemma~\ref{lem:sum} gives
\begin{multline*}
D\bigl(
(N_{1,1}\times N'_{2,1})
\#
(N_{2,1}\times N'_{1,1}),
\,N_{1,2}\times N_{2,2}
\bigr)
\\
=
D(N_{1,1}\times N'_{2,1},
  N_{1,2}\times N_{2,2})
+
D(N_{2,1}\times N'_{1,1},
  N_{1,2}\times N_{2,2})
=
A_1+A_2
=
A.
\end{multline*}

By Proposition \ref{prop:decomposition}, every additive submonoid of $\Z$ can be written as a finite sum of sets of the form
$
d\,\Z_{\geq 0},
\; d\in\Z.
$
Hence every set $A$ as in the statement can be obtained from sets of the form
$d\,\Z_{\geq 0}$ by finitely many sums and products.

We argue by induction on the complexity of such an expression for $A$.

First consider a building block
$
d_0\,\Z_{\geq 0}.
$
Let
$
j=\prod d,
$
where the product ranges over all integers $d$ associated with the building blocks occurring in the chosen expression of $A$, and let
$
i=\frac{j}{d_0}.
$
Then, for every $g,n>1$, Proposition \ref{cor:dZ>=0} gives
$$
D(E_{i,g}\times X_n^{\# s_n},E_{j,g}\times X_n)
=
d_0\,\Z_{\geq 0}.
$$
Thus each building block can be realized by a pair of the form
$$
(E_{i,g}\times X_n^{\# s_n},\, E_{j,g}\times X_n).
$$
As shown in the previous sections, $E_{j,g}$ and $X_n$ are aspherical, and hence so is their product. Furthermore, the parameters $g$ and $n$ may be chosen arbitrarily large.

We first treat the product operation. Assume inductively that $A_1$ and $A_2$ have been realized by pairs
$$
(N_{1,1},N_{1,2})
\qquad\text{and}\qquad
(N_{2,1},N_{2,2}),
$$
satisfying
$$
D(N_{1,1},N_{1,2})=A_1,
\qquad
D(N_{2,1},N_{2,2})=A_2,
$$
and such that each $N_{k,2}$, $k=1, 2$  is a finite product of manifolds of the form
$E_{j,g}$ and $X_n$.

When constructing the second pair, we choose all parameters $g$ and $n$ sufficiently large with respect to $N_{1,1}$. More precisely, writing
$
N_{2,2}=\prod_r Y_r,
$
where each $Y_r$ is either of the form $E_{j,g}$ or $X_n$, we choose them so that, for every factor $Y_r$, either
$
\dim(Y_r)>\dim(N_{1,1})
$
or there exists $p\geq 1$ such that
$
\dim H_p(Y_r,\Q)>\dim H_p(N_{1,1},\Q).
$
It follows that, for every map
$
f\colon N_{1,1}\to Y_r,
$
the induced homomorphism
$
H_p(f)\colon H_p(N_{1,1},\Q)\to H_p(Y_r,\Q)
$
fails to be surjective for some $p\geq 1$. Thus hypothesis \eqref{cond2} of Theorem \ref{thm:product} is satisfied with $M_1=N_{1,1}$.

Moreover, each factor $Y_r$ is not dominated by any nontrivial product, by Lemmas \ref{lem:Einotdomprod} and \ref{lem:Xnotdomprod}. Hence hypothesis \eqref{cond1} of Theorem \ref{thm:product} is also satisfied.

Therefore Theorem \ref{thm:product} applies and yields
$$
D(N_{1,1}\times N_{2,1},
  N_{1,2}\times N_{2,2})
=
D(N_{1,1},N_{1,2})
D(N_{2,1},N_{2,2})
=
A_1\cdot A_2.
$$

We now turn to the sum operation.
        For $k\in\{1,2\}$, $N_{k,2}=\prod_{l=1}^{m_k} (E_{j,g_{k,l}}\times X_{n_{k,l}})$ by hypothesis. Define the manifold $N'_{k,1}=\prod_{l=1}^{m_k} (E_{j,g_{k,l}}\times X^{\#s_{n_{k,l}}}_{n_{k,l}})$ for $k\in\{1,2\}$. It is clear that $\dim(N_{k,2})=\dim(N'_{k,1})$. Suppose by construction that $D(N'_{k,1},N_{k,2})=\Z_{\geq 0}$ and that $\dim(H_1(N_{k,1},\Q))\geq \dim(H_1(N'_{k,1},\Q))$. Then, $\dim(H_1(N_{1,1}\times N_{2,1},\Q))\geq \dim(H_1(N'_{1,1}\times N'_{2,1},\Q))$ and by Theorem \ref{thm:product}
    $$D(N'_{1,1}\times N'_{2,1},N_{1,2}\times N_{2,2})=D(N'_{1,1},N_{1,2}) D(N'_{2,1},N_{2,2})=\Z_{\geq 0}\cdot \Z_{\geq 0}=\Z_{\geq 0}.$$

    Notice that every set $A$ that is an iterated sum and product of sets $d\,\Z_{\geq 0}$ satisfies $A\cdot \Z_{\geq 0}=A$. Therefore, applying Theorem \ref{thm:product},
    $$D(N_{k,1}\times N'_{3-k,1},N_{1,2}\times N_{2,2})=D(N_{k,1},N_{k,2})D(N'_{3-k,1},N_{3-k,2})=A_k\cdot \Z_{\geq0}=A_k$$
    for $k\in\{1,2\}$ and, since $N_{1,2}\times N_{2,2}$ is aspherical, by Lemma \ref{lem:sum},
    \begin{multline*}
        D((N_{1,1}\times N'_{2,1})\#(N_{2,1}\times N'_{1,1}),N_{1,2}\times N_{2,2})=\\=D(N_{1,1}\times N'_{2,1},N_{1,2}\times N_{2,2})+D(N_{2,1}\times N'_{1,2},N_{1,1}\times N_{2,2})=A_1+A_2.
    \end{multline*}

    Finally, one checks that $\dim(H_1((N_{1,1}\times N'_{2,1})\#(N_{2,1}\times N'_{1,1}),\Q))\geq \dim(H_1(N'_{1,1}\times N'_{2,1},\Q))$. 
 \end{proof}
 \fi 

\begin{theorem}\label{thm:proofmain}
 Let \(A \) be a set in  $ \mathcal C =\bigcup_{n\geq 0}\mathcal C_n$, and let $M_1$ and $M_2$ be a pair of closed oriented connected manifolds of the same dimension. Then there exist closed oriented connected manifolds $N_1,N'_1,N_2$ such that:

\begin{enumerate}
\item\label{property1}
$D(N_1,N_2)=A;$
\item\label{property2}
$D(N'_1,N_2)=\Z_{\geq 0};$
\item
\(N_2\) is a finite product of manifolds of the form \(E_{j,g}\) or \(X_n\), with  parameters $g$ and $n$ arbitrarily large;
\item
$
D(M_1\times N_1,M_2\times N_2)
=
D(M_1,M_2)\cdot A.
$
\end{enumerate}
\end{theorem}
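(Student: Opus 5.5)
We argue by induction on the least $n$ with $A\in\mathcal C_n$, proving properties \eqref{property1}--\eqref{property2} and (3) simultaneously. Property (4) then follows from (1) and (3) by one application of Theorem~\ref{thm:product}. We take $M=M_1$, $N=N_1$, and the factors of $N_2$ as the $N_i$. Hypothesis \eqref{cond1} holds by Lemmas~\ref{lem:Einotdomprod} and \ref{lem:Xnotdomprod}. Hypothesis \eqref{cond2} holds once every factor $Y$ of $N_2$ satisfies either $\dim Y>\dim M_1$ (for $Y=X_n$ with $n$ large) or $\dim H_1(Y,\Q)\ge 2g>\dim H_1(M_1,\Q)$ (for $Y=E_{j,g}$ with $g$ large). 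This is why (3) insists on arbitrarily large parameters.

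\emph{Base case $A\in\mathcal C_0$.} Write $A=\sum_{l}d_l\Z_{\ge0}$ by Proposition~\ref{prop:decomposition}, and set $j$ and $i_l$ as in the proof of Corollary~\ref{cor:additive}. Take $N_2=E_{j,g}\times X_n$, $N_1=\#_l (E_{i_l,g}\times X_n^{\#s_n})$ and $N_1'=E_{j,g}\times X_n^{\#s_n}$. Corollary~\ref{cor:additive} gives $D(N_1,N_2)=A$. Lemma~\ref{lem:Ei} and Proposition~\ref{cor:dZ>=0} with $D(E_j,E_j)=\{0,1\}$ give $D(N_1',N_2)=\{0,1\}\cdot\Z_{\ge0}=\Z_{\ge0}$. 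We need $2n+1>3$, which is automatic for $n\ge2$, and $g,n$ are free.

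\emph{Product step $A=A_1A_2$.} By induction we have triples $(N_{k,1},N'_{k,1},N_{k,2})$ for $k=1,2$. We build the second triple after the first, choosing its parameters large relative to both $N_{1,1}$ and $N'_{1,1}$. Theorem~\ref{thm:product}, applied with $M=N_{1,1}$ (respectively $N'_{1,1}$), then gives
$D(N_{1,1}\times N_{2,1},N_{1,2}\times N_{2,2})=A_1A_2$ and $D(N'_{1,1}\times N'_{2,1},N_{1,2}\times N_{2,2})=\Z_{\ge0}\cdot\Z_{\ge0}=\Z_{\ge0}$.
Here the first factor's target must match $N_{1,2}$ in dimension, which it does.

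\emph{Sum step $A=A_1+A_2$.} Again choose the second triple's parameters large relative to $N_{1,1}$ and $N'_{1,1}$. Theorem~\ref{thm:product} yields $D(N_{1,1}\times N'_{2,1},N_{1,2}\times N_{2,2})=A_1\cdot\Z_{\ge0}$ and $D(N'_{1,1}\times N_{2,1},N_{1,2}\times N_{2,2})=\Z_{\ge0}\cdot A_2$. In the second of these, the roles of the factors need Theorem~\ref{thm:product} with $M=N'_{1,1}$, which is covered by the choice above.

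We then need $A\cdot\Z_{\ge0}=A$ for every $A\in\mathcal C$. This follows by a separate induction: submonoids are closed under multiplication by $\Z_{\ge0}$, and the property is preserved by sums and products, using $(A_1+A_2)\Z_{\ge0}\subseteq A_1\Z_{\ge0}+A_2\Z_{\ge0}$ and $0,1\in\Z_{\ge0}\cdot\Z_{\ge 0}$.

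The target $N_{1,2}\times N_{2,2}$ is aspherical, hence $\pi_{m-1}=0$, so Lemma~\ref{lem:sum} gives
$D\bigl((N_{1,1}\times N'_{2,1})\#(N'_{1,1}\times N_{2,1}),N_{1,2}\times N_{2,2}\bigr)=A_1+A_2$.
For the new $N_1'$ we take $N'_{1,1}\times N'_{2,1}$, as in the product step.

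The main obstacle is the bookkeeping of parameter choices. Hypothesis \eqref{cond2} must hold against $N_{1,1}$, against $N'_{1,1}$, and in the final step against $M_1$. We therefore construct everything in a fixed order: first the first triple relative to $M_1$, then the second relative to $M_1$, $N_{1,1}$ and $N'_{1,1}$. The factors of $N_{1,2}$ must also beat $M_1$, which is arranged by carrying the large-parameter requirement, relative to any prescribed finite list of manifolds, through the inductive hypothesis.
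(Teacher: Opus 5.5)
Your proposal is correct and follows the paper's proof essentially step for step. It uses the same induction over $\mathcal C_n$, the same base triple $(\#_l(E_{i_l,g}\times X_n^{\#s_n}),\,E_{j,g}\times X_n^{\#s_n},\,E_{j,g}\times X_n)$, the same product and connected-sum constructions with the auxiliary $N'_1$, and a final application of Theorem~\ref{thm:product} against $M_1$; your two additions (the explicit check that $A\cdot\Z_{\ge0}=A$ for all $A\in\mathcal C$, and carrying ``large relative to any prescribed finite list of manifolds'' through the induction) just make rigorous what the paper uses implicitly.
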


\begin{proof}
Since $A \in \mathcal C =\bigcup_{n\geq 0}\mathcal C_n$, then $A \in \mathcal C_n$ for some $n$. We prove the theorem by induction on \(n\). Assume first that \(A\in\mathcal C_0\). Then \(A\) is an additive submonoid of \(\mathbb Z\). By Corollary \ref{cor:additive}, there exist closed oriented connected manifolds \(N_1\) and \(N_2\) such that
\[
D(N_1,N_2)=A.
\]
Moreover, by the construction in the proof of
Corollary~\ref{cor:additive}, \(N_2\) is of the form
\(E_{j,g}\times X_n\). Letting \(N'_1=E_{j,g}\times X_n^{\#s_n}\), \(D(N'_1,N_2)=\Z_{\geq 0}\) by Corollary \ref{cor:dZ}. 

Let now \(M_1\) and \(M_2\) be closed oriented connected manifolds of the same dimension. Since the parameters \(g\) and \(n\) occurring in the construction of \(N_2\) can be chosen arbitrarily large, we may choose them sufficiently large with respect to \(M_1\) so that the hypotheses of Theorem~\ref{thm:product} are satisfied. Hence
\[
D(M_1\times N_1,M_2\times N_2)
=
D(M_1,M_2)\cdot D(N_1,N_2)
=
D(M_1,M_2)\cdot A.
\]
Thus the assertion holds for every \(A\in\mathcal C_0\).

Assume now that the assertion holds for every set in
\(\bigcup_{j\leq n}\mathcal C_j\), and let \(A\in\mathcal C_{n+1}\). By definition, either
\[
A=A_1+A_2
\qquad\text{or}\qquad
A=A_1\cdot A_2,
\]
where \(A_1,A_2\in\bigcup_{j\leq n}\mathcal C_j\).
By the induction hypothesis, there exist closed oriented connected manifolds
\[
(N_{1,1},N'_{1,1},N_{1,2})
\qquad\text{and}\qquad
(N_{2,1},N'_{2,1},N_{2,2})
\]
such that
\[
D(N_{1,1},N_{1,2})=A_1,
\qquad
D(N_{2,1},N_{2,2})=A_2,
\qquad
D(N'_{1,1},N_{1,2})=D(N'_{2,1},N_{2,2})=\Z_{\geq 0},
\]
and each of \(N_{1,2}\) and \(N_{2,2}\) is a finite product of manifolds of the form $E_{j,g}$ or $X_n$.

 When constructing the second pair, we choose all parameters $g$ and $n$ occurring in \(N_{2,2}\) sufficiently large with respect to \(N_{1,1}\) and \(N'_{1,1}\). Write
\[
N_{2,2}=\prod_r Y_r,
\]
where each \(Y_r\) is of the form $E_{j,g}$ or $X_n$.
By choosing the parameters sufficiently large, we may assume that for every factor \(Y_r\), either
\[
\dim(Y_r)>\dim(N_{1,1})= \dim(N'_{1,1}),
\]
or there exists \(p\geq 1\) such that
\[
\dim H_p(Y_r;\Q)>\dim H_p(N_{1,1};\Q),\dim H_p(N'_{1,1};\Q).
\]
Hence, hypothesis~\eqref{cond2} of Theorem~\ref{thm:product} is satisfied. Moreover, each factor \(Y_r\) is not dominated by any non-trivial product, by Lemmas~\ref{lem:Einotdomprod} and \ref{lem:Xnotdomprod}. Therefore hypothesis~\eqref{cond1} of Theorem~\ref{thm:product} is also satisfied, and we obtain the following:
\begin{gather*}
    D(N_{1,1}\times N_{2,1},
    N_{1,2}\times N_{2,2})
    =
    D(N_{1,1},N_{1,2})
    \cdot D(N_{2,1},N_{2,2})
    =
    A_1\cdot A_2,\\
    D(N'_{1,1}\times N'_{2,1},
    N_{1,2}\times N_{2,2})
    =
    D(N'_{1,1},N_{1,2})
    \cdot D(N'_{2,1},N_{2,2})
    =
    \Z_{\geq 0}\cdot \Z_{\geq 0}=\Z_{\geq 0} \textrm{ and}\\
    D(N_{k,1}\times N'_{3-k,1},
  N_{k,2}\times N_{3-k,2})
=
D(N_{k,1},N_{k,2})\cdot
D(N'_{3-k,1},N_{3-k,2})
=
A_k\cdot\mathbb Z_{\ge0}
=
A_k,
\end{gather*}
for \(k\in\{1,2\}\). Then, for \(N_1:= N_{1,1}\times N_{2,1}\), \,   \(N_2:= N_{1,2}\times N_{2,2}\), and \(N_1':= N_{1,1}'\times N_{2,1}'\) the first equation is the product case
$
A=A_1\cdot A_2
$
of property~\eqref{property1} and the second one is property~\eqref{property2}.

Moreover, \(N_2\) is again a finite product of manifolds of the form \(E_{j,g}\) or \(X_n\) with parameters $g$ and $n$ arbitrarily large.

We  now turn to the sum case
$
A=A_1+A_2
$.
Since \(N_{1,2}\times N_{2,2}\) is aspherical, Lemma~\ref{lem:sum} gives
\begin{multline*}
D\bigl(
(N_{1,1}\times N'_{2,1})
\#
(N_{2,1}\times N'_{1,1}),
\,N_{1,2}\times N_{2,2}
\bigr)
\\
=
D(N_{1,1}\times N'_{2,1},
  N_{1,2}\times N_{2,2})
+
D(N_{2,1}\times N'_{1,1},
  N_{1,2}\times N_{2,2})
=
A_1+A_2
=
A.
\end{multline*}

Let now \(M_1\) and \(M_2\) be closed oriented connected manifolds of the same dimension. Write $N_1= N_{1,1}\times N_{2,1}$ in the product case and $N_1= (N_{1,1}\times N'_{2,1})
\#
(N_{2,1}\times N'_{1,1})$ in the sum case. Since the parameters occurring in the construction of \(N_{1,2}\) and \(N_{2,2}\) can be chosen arbitrarily large, we may choose them successively sufficiently large with respect to \(M_1\) so that the hypotheses of Theorem~\ref{thm:product} are satisfied for the pair
\[
\bigl(M_1 \times N_1,\,
M_2 \times N_2\bigr).
\]
Applying Theorem~\ref{thm:product} once more, we obtain
\[
D\bigl(M_1\times N_1,
       M_2\times N_{1,2}\times N_{2,2}\bigr)
=
D(M_1,M_2)\cdot
D(N_1,
  N_{1,2}\times N_{2,2}).
\]
Since
\[
D(N_1,
  N_{1,2}\times N_{2,2})
=A,
\]
it follows that
\[
D\bigl(M_1\times N_1,
       M_2\times N_{1,2}\times N_{2,2}\bigr)
=
D(M_1,M_2)\cdot A.
\]

\if{}
By Proposition \ref{prop:decomposition}, every additive submonoid of $\Z$ can be written as a finite sum of sets of the form
$
d\,\Z_{\geq 0},
\; d\in\Z.
$
Hence every set $A$ as in the statement can be obtained from sets of the form
$d\,\Z_{\geq 0}$ by finitely many sums and products.

We argue by induction on the complexity of such an expression for $A$.

First consider a building block
$
d_0\,\Z_{\geq 0}.
$
Let
$
j=\prod d,
$
where the product ranges over all integers $d$ associated with the building blocks occurring in the chosen expression of $A$, and let
$
i=\frac{j}{d_0}.
$
Then, for every $g,n>1$, Proposition \ref{cor:dZ>=0} gives
$$
D(E_{i,g}\times X_n^{\# s_n},E_{j,g}\times X_n)
=
d_0\,\Z_{\geq 0}.
$$
Thus each building block can be realized by a pair of the form
$$
(E_{i,g}\times X_n^{\# s_n},\, E_{j,g}\times X_n).
$$
As shown in the previous sections, $E_{j,g}$ and $X_n$ are aspherical, and hence so is their product. Furthermore, the parameters $g$ and $n$ may be chosen arbitrarily large.

We first treat the product operation. Assume inductively that $A_1$ and $A_2$ have been realized by pairs
$$
(N_{1,1},N_{1,2})
\qquad\text{and}\qquad
(N_{2,1},N_{2,2}),
$$
satisfying
$$
D(N_{1,1},N_{1,2})=A_1,
\qquad
D(N_{2,1},N_{2,2})=A_2,
$$
and such that each $N_{k,2}$, $k=1, 2$  is a finite product of manifolds of the form
$E_{j,g}$ and $X_n$.

When constructing the second pair, we choose all parameters $g$ and $n$ sufficiently large with respect to $N_{1,1}$. More precisely, writing
$
N_{2,2}=\prod_r Y_r,
$
where each $Y_r$ is either of the form $E_{j,g}$ or $X_n$, we choose them so that, for every factor $Y_r$, either
$
\dim(Y_r)>\dim(N_{1,1})
$
or there exists $p\geq 1$ such that
$
\dim H_p(Y_r,\Q)>\dim H_p(N_{1,1},\Q).
$
It follows that, for every map
$
f\colon N_{1,1}\to Y_r,
$
the induced homomorphism
$
H_p(f)\colon H_p(N_{1,1},\Q)\to H_p(Y_r,\Q)
$
fails to be surjective for some $p\geq 1$. Thus hypothesis \eqref{cond2} of Theorem \ref{thm:product} is satisfied with $M_1=N_{1,1}$.

Moreover, each factor $Y_r$ is not dominated by any nontrivial product, by Lemmas \ref{lem:Einotdomprod} and \ref{lem:Xnotdomprod}. Hence hypothesis \eqref{cond1} of Theorem \ref{thm:product} is also satisfied.

Therefore Theorem \ref{thm:product} applies and yields
$$
D(N_{1,1}\times N_{2,1},
  N_{1,2}\times N_{2,2})
=
D(N_{1,1},N_{1,2})
D(N_{2,1},N_{2,2})
=
A_1\cdot A_2.
$$

We now turn to the sum operation.
        For $k\in\{1,2\}$, $N_{k,2}=\prod_{l=1}^{m_k} (E_{j,g_{k,l}}\times X_{n_{k,l}})$ by hypothesis. Define the manifold $N'_{k,1}=\prod_{l=1}^{m_k} (E_{j,g_{k,l}}\times X^{\#s_{n_{k,l}}}_{n_{k,l}})$ for $k\in\{1,2\}$. It is clear that $\dim(N_{k,2})=\dim(N'_{k,1})$. Suppose by construction that $D(N'_{k,1},N_{k,2})=\Z_{\geq 0}$ and that $\dim(H_1(N_{k,1},\Q))\geq \dim(H_1(N'_{k,1},\Q))$. Then, $\dim(H_1(N_{1,1}\times N_{2,1},\Q))\geq \dim(H_1(N'_{1,1}\times N'_{2,1},\Q))$ and by Theorem \ref{thm:product}
    $$D(N'_{1,1}\times N'_{2,1},N_{1,2}\times N_{2,2})=D(N'_{1,1},N_{1,2}) D(N'_{2,1},N_{2,2})=\Z_{\geq 0}\cdot \Z_{\geq 0}=\Z_{\geq 0}.$$

    Notice that every set $A$ that is an iterated sum and product of sets $d\,\Z_{\geq 0}$ satisfies $A\cdot \Z_{\geq 0}=A$. Therefore, applying Theorem \ref{thm:product},
    $$D(N_{k,1}\times N'_{3-k,1},N_{1,2}\times N_{2,2})=D(N_{k,1},N_{k,2})D(N'_{3-k,1},N_{3-k,2})=A_k\cdot \Z_{\geq0}=A_k$$
    for $k\in\{1,2\}$ and, since $N_{1,2}\times N_{2,2}$ is aspherical, by Lemma \ref{lem:sum},
    \begin{multline*}
        D((N_{1,1}\times N'_{2,1})\#(N_{2,1}\times N'_{1,1}),N_{1,2}\times N_{2,2})=\\=D(N_{1,1}\times N'_{2,1},N_{1,2}\times N_{2,2})+D(N_{2,1}\times N'_{1,2},N_{1,1}\times N_{2,2})=A_1+A_2.
    \end{multline*}

    Finally, one checks that $\dim(H_1((N_{1,1}\times N'_{2,1})\#(N_{2,1}\times N'_{1,1}),\Q))\geq \dim(H_1(N'_{1,1}\times N'_{2,1},\Q))$.
\fi 
 
\end{proof}

\begin{remark}
    By construction, a simple verification shows that $\dim H_p(N_1;\Q)\geq\dim H_p(N'_1;\Q)$, so in the proof of Theorem \ref{thm:proofmain} it was sufficient to require the parameters $g$ and $n$ occurring in \(N_{2,2}\) to be sufficiently large with respect to \(N_{1,1}\) and \(M_1\).
\end{remark}

\bibliographystyle{abbrv}
\bibliography{refCMuV3}
\end{document}